\documentclass[12pt]{article}
\usepackage{graphicx} 
\usepackage{amsmath}
\usepackage{amssymb}
\usepackage{amsthm}

\newtheorem{theorem}{Theorem}

\newtheorem{lemma}{Lemma}
\newtheorem{definition}{Definition}

\newtheorem{conjecture}{Conjecture}

\usepackage[english]{babel}
\usepackage[utf8]{inputenc}
\usepackage[left=1in,top=1in,right=1in]{geometry}
\usepackage[colorlinks=true, allcolors=blue]{hyperref}
\usepackage[inline, shortlabels]{enumitem}
\usepackage{mathtools}
\usepackage[capitalise]{cleveref}

\DeclareMathOperator{\ex}{ex}
\newcommand{\floor}[1]{\left\lfloor #1 \right\rfloor}
\newcommand{\ceil}[1]{\left\lceil #1 \right\rceil}

\title{Weighted Tur\'an densities for hypergraphs}
\author{
David James\thanks{
Department of Mathematics, Statistics and Computer Science, University of Illinois, Chicago, IL 60607. E-mail: \texttt{davidtj2@uic.edu}.
}
\and Bernard Lidick\'y\thanks{
Department of Mathematics, Iowa State University, Ames, IA 50011. E-mail: {\tt lidicky@iastate.edu}. Research of this author is supported in part by NSF FRG DMS-2152490 and DMS-2554129, the Simons Foundation TSM-00013439 and Scott Hanna Professorship.
}
\and 
Dhruv Mubayi\thanks{
Department of Mathematics, Statistics and Computer Science, University of Illinois, Chicago, IL 60607. E-mail: \texttt{mubayi@uic.edu}. Research of this author was partially supported by NSF Awards DMS-2153576 and DMS-2552740.
}
}
\date{\today}

\begin{document}

\maketitle
\begin{abstract}
Bondy and Tuza generalized the Tur\'an problem for graphs to weighted graphs. Subsequently, F\"uredi and K\"undgen gave a solution in the case of cliques.  We consider the weighted Tur\'an problem on hypergraphs and prove the first tight asymptotic result for 3-uniform hypergraphs. Specifically, we determine the weighted Tur\'an density of the generalized triangle  $F_5 = \{abc, abd, cde\}$. We also prove weighted stability results and exact results.

Many intriguing open problems remain, the most tantalizing of which is to determine the weighted Tur\'an density of the tetrahedron. Bounds for this density using flag algebras are presented.
\end{abstract}

 \section{AI Declaration Statement}
 All proofs in this paper were obtained by the authors without any use of AI tools. After the paper was written (also without any AI tools) and ready to be submitted for publication, the authors used ChatGPT-6 to check the write-up for correctness and English typos.

\section{Introduction}

Given a graph $F$, we say a graph $G$ is $F$-free if it does not contain any subgraph isomorphic to $F$. A central problem in extremal combinatorics is to determine the maximum number of edges in an $F$-free graph on $n$ vertices. This is called the \emph{Tur\'an number} of $F$, and it is denoted by $\ex(n, F)$. The asymptotic behavior is given by the \emph{Tur\'an density}, denoted $\pi(F) = \lim_{n \to \infty} \ex(n,F) / \binom{n}{2}$. Much is known in the graph setting. For example, the Tur\'an numbers of complete graphs are determined by Tur\'an's Theorem~\cite{originalTuranPaper}. Furthermore, for all non-bipartite graphs $F$, the Tur\'an densities are given by the Erd\H{o}s--Stone--Simonovits Theorem~\cite{erdosSim, erdosStone}.

A natural generalization of this problem is to consider weighted graphs, where each edge $e$ is assigned a weight $w(e)$ (and non-edges can be viewed as weight 0). We define the weight of a graph by $w(G)$ as the sum of its edge weights. In~\cite{bondytuza}, Bondy and Tuza defined the weighted Tur\'an number $\ex_{\mathbb{Z}}(n, K_m, r)$, which is the maximum weight of a graph $G$ with integer weights on $n$ vertices such that no copy of $K_m$ has weight at least $r$, and generalized Tur\'an's Theorem to prove $\ex_{\mathbb{Z}}(n, K_m, \binom{m}{2}) = \ex(n, K_m)$. Later, F\"uredi and K\"undgen~\cite{intweightedgraph} determined the Tur\'an densities for all values of $m$ and $r$. For each fixed $m$, they also show these asymptotics hold in all but finitely many values of $r$ when the weights are natural numbers, which can be viewed as a multigraph extremal problem instead. These multigraph Tur\'an numbers are denoted $\ex_{\mathbb{N}}(n, K_m, r)$.

Another generalization of Tur\'an problems is to consider $k$-uniform hypergraphs. Here, the analogous definitions and notations are used except $G$ and $F$ are now $k$-uniform hypergraphs. Compared to the graph case, much less is known. Both exact and asymptotic results are uncommon (see~\cite{keevashSurvey}). One case where such results are known is $\ex(n, F_5)$, where $F_5$ is a 3-graph on five vertices with edges $\{123, 124, 345\}$. Let $S(n)$ be the complete tripartite 3-graph with parts of size $\floor{n/3}$, $\floor{(n+1)/3}$, and $\floor{(n+2)/3}$, and let $s(n)$ be the number of edges of $S(n)$. Note $s(n) = \frac{2}{9} \binom{n}{3} + o(n^3)$. Frankl and F\"uredi~\cite{originalF5result} showed that $\ex(n, F_5) = s(n)$ for $n \geq 3000$. Keevash and Mubayi~\cite{dhruvF5Paper} improved this by showing $\ex(n, F_5) = s(n)$ for $n \geq 33$ while also producing a stability theorem. 

In this paper, we combine both generalizations. We consider Tur\'an problems on hypergraphs with nonnegative integer weights. We say that a 3-uniform hypergraph $G = (V,E)$ is a \emph{weighted 3-graph} if there is some function $w$ that assigns each edge a weight in $\mathbb{N} = \{0,1,\ldots\}$ (and non-edges are assigned weight 0). Note that any weighted 3-graph $(G, w)$ can be expressed as a weighting $w'$ of $K_n^3$, the complete 3-uniform hypergraph, where $w'$ assigns 0 when $G$ has a non-edge and copies $w(e)$ for any $e \in E(G)$. Throughout this paper, we will use this view, so we consider $w : \binom{V}{3} \to \mathbb{N}$ and $w(G) = \sum_{e \in \binom{V}{3}} w(e)$.  

Let $F$ be a nonempty 3-uniform hypergraph and $t$ be an integer. Given a weighted 3-graph $(G, w)$ (which can be expressed as some $(K_n^3, w')$), we say that $(G, w)$ is \emph{$(F,t)$-free} if for any copy of $F$ in $K_n^3$, $\sum_{e \in F} w'(e)$ is less than $t$.

\begin{definition}
    Given a nonempty 3-uniform hypergraph $F$ and integers $t \geq 1$ and $n \geq v(F) \in \mathbb{N}$, the weighted Tur\'an number $\ex_{\mathbb{N}}(n, F, t)$ is the maximum weight of a weighted 3-graph $(G,w)$ on $n$ vertices that is $(F,t)$-free.
\end{definition}

Since $F$ is nonempty and $n \geq v(F)$, every $e \in \binom{V}{3}$ is contained in some copy of $F$, so $0 \leq w(e) \leq t-1$. Thus $\ex_{\mathbb{N}}(n, F, t)$ is finite. By the standard averaging argument, $\frac{\ex_{\mathbb{N}}(n, F, t)}{\binom{n}{3}}$ is a nonincreasing function of $n$. Hence we can define a weighted Tur\'an density as well.
\begin{definition}
    Given a nonempty 3-uniform hypergraph $F$ and integer $t \geq 1$, the weighted Tur\'an density is 
    \[\pi_{\mathbb{N}}(F, t) = \lim_{n \to \infty} \frac{\ex_{\mathbb{N}}(n,F,t)}{\binom{n}{3}}.
    \]
\end{definition}
In this paper, we investigate $(F_5, t)$-free hypergraphs for all values of $t \geq 1$. Our main result is the Tur\'an densities of $(F_5, t)$-free hypergraphs for all values of $t \geq 1$. The behavior of $\ex_{\mathbb{N}}(n, F_5, t)$ depends on the value of $t$ modulo 3. As a result, all of our results for this problem will be split into three cases.
\begin{theorem}\label{thm:all3asym}
For every integer $k \geq 0$,
\[
\pi_{\mathbb{N}}(F_5, 3k+1) = \pi_{\mathbb{N}}(F_5, 3k+2) = k\quad\text{ and } \quad \pi_{\mathbb{N}}(F_5,3k+3) = k+\frac29.
\]
\end{theorem}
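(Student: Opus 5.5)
Lower bounds come first, via explicit weightings of $K_n^3$. For $t=3k+1$ and $t=3k+2$, I give every triple weight $k$; every copy of $F_5$ then has weight $3k<t$, so $\ex_{\mathbb{N}}(n,F_5,t)\ge k\binom n3$. For $t=3k+3$, I take the tripartite 3-graph $S(n)$, give its crossing triples weight $k+1$ and all other triples weight $k$. Since $S(n)$ is $F_5$-free, every copy of $F_5$ contains at most two crossing edges, so its weight is at most $3k+2<t$. The total weight is $k\binom n3+s(n)=(k+\frac29+o(1))\binom n3$.

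For the upper bounds, let $w$ be $(F_5,t)$-free on $n$ vertices. I would average over copies of $F_5$, writing $F_5=\{abc,abd,cde\}$.

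\emph{The case $t=3k+1$.} Here $w(abc)+w(abd)+w(cde)\le 3k$ for every labelled copy. Averaging this inequality over all injective labellings of five vertices, each of the three edge roles is uniformly distributed over triples. Hence the average edge weight is at most $k$, which gives $\pi_{\mathbb{N}}(F_5,3k+1)\le k$.

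\emph{The case $t=3k+2$.} Now a copy may have weight $3k+1$, so plain averaging gives only $k+\frac13$, and a sharper argument is needed. Call a triple \emph{heavy} if $w(e)\ge k+1$. Suppose $abc$ and $abd$ are both heavy, so together they weigh at least $2k+2$. Then every triple $cde$ has weight at most $k-1$. I would convert this into a bound on the total excess $\sum_e (w(e)-k)$. Let $H$ be the 3-graph of heavy triples, and let $L$ be the set of pairs $\{c,d\}$ that appear as $c,d$ in two heavy triples sharing the pair $ab$. Each such pair forces deficiency at least $1$ on the $n-4$ triples $cde$. Heavy excess is bounded by $t-1-k$ per triple. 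So I need to show that the number of heavy triples, weighted by excess, is at most the total deficiency plus $o(n^3)$. If $H$ has $\epsilon n^3$ edges, then many pairs have codegree at least $2$ in $H$, giving $\Omega(\epsilon n^2)$ pairs in $L$, each forcing $\Omega(n)$ deficient triples. The difficulty is that deficient triples are double counted across pairs in $L$, and excess can be as large as $2k+1$. I would handle this with a weighted inequality on the link structure, or by Lagrangian/symmetrization. This case is, I expect, the main obstacle: it is where the modular behaviour appears, and the proof must show that heavy edges cannot be exploited at all.

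\emph{The case $t=3k+3$.} Write $w=k+x$ with $x\ge -k$. The copy condition becomes $x(abc)+x(abd)+x(cde)\le 2$. Triples with $x\ge 1$ form a 3-graph $H$. I would show that, after accounting for negative compensation, this case essentially reduces to $H$ being $F_5$-free. If $H$ contains a copy of $F_5$ whose three edges all have $x\ge1$, the copy already has $x$-sum $3$, a contradiction, so the $x\ge1$ edges are $F_5$-free. The remaining work is to show that edges with $x\ge2$, which force many negative triples, are not profitable. This uses the same mechanism as in the $3k+2$ case: two heavy edges $abc,abd$ with $x$-sum $s\ge3$ force $x(cde)\le 2-s\le -1$. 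Then the Frankl--F\"uredi / Keevash--Mubayi bound $\ex(n,F_5)\le s(n)$ yields density at most $k+\frac29$. The hard step is again charging large-excess edges to the forced deficient triples.
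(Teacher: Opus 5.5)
Your lower bounds are fine, and your averaging argument for $t=3k+1$ is correct. In fact it gives $w(G)\le k\binom{n}{3}$ exactly for every $n\ge 5$, which is simpler and stronger than what the paper gets.

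The gap is in the two upper bounds that carry the content of the theorem. For $t=3k+2$ you only record that two heavy triples $abc,abd$ force $w(cde)\le k-1$. You then defer the comparison of total excess with total deficiency to an unspecified ``weighted inequality on the link structure, or Lagrangian/symmetrization''. The counts you sketch have unspecified constants and unknown overlap: $\Omega(\epsilon n^2)$ pairs in $L$, each producing $\Omega(n)$ deficient triples. They cannot be weighed against an excess that may be as large as $(2k+1)\epsilon n^3$, so nothing is proved there. Your $t=3k+3$ argument reduces to the same charging step: you would need $\sum_{x(e)\ge 2}(x(e)-1)$ to be at most the total deficiency up to $o(n^3)$, so that $|H|\le s(n)$ finishes. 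That inequality is neither proved nor an evident consequence of anything you state.

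The missing idea is to use the same copy of $F_5$ with the heavy triple in the role of $cde$, and to argue locally by deleting vertices. If $w(xyz)\ge k+1$ and $u,v\notin\{x,y,z\}$, then $\{xyz,xuv,yuv\}$ is a copy of $F_5$. Hence any two of $w(xuv),w(yuv),w(zuv)$ sum to at most $t-1-(k+1)$.

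For $t=3k+2$ this gives $w(xuv)+w(yuv)+w(zuv)\le 3k$ for every pair $uv$. So the triples meeting $\{x,y,z\}$ in exactly one vertex weigh at most $3k\binom{n-3}{2}$, and the others weigh $O(n)$. Deleting $\{x,y,z\}$ and inducting on $n$ gives $w(G)\le k\binom{n}{3}+O(n^2)$.

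For $t=3k+3$, apply the same constraint to a set $S$ each of whose pairs lies in a heavy triple inside $S$. It gives $\sum_{x\in S}w(xuv)\le k|S|+1$, and the extra $1$ occurs only if some $xuv$ is heavy. So you must bound the number of heavy triples meeting the deleted $S$ in one vertex by $\frac29|S|\binom{n-|S|}{2}+O(n)$. This is a local degree statement, and the global bound $\ex(n,F_5)\le s(n)$ does not provide it. The paper gets it from the Keevash--Mubayi lemma (Lemma~\ref{lem:dhruv}). That lemma supplies $S$ with $|S|\in\{3,5,7\}$, all pairs of $S$ covered by heavy triples inside $S$, and $\sum_{x\in S}d_H(x)\le\alpha\binom{n-|S|}{2}+O(n)$ with $\alpha/|S|\le\frac29$.
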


We further refine these results into stability theorems in all three cases. 
\begin{theorem}\label{thm:stabAll}
For every integer $k \geq 0$ and every $\varepsilon > 0$, there exists $\delta > 0$ and an integer $N$ such that for all $n > N$, if $G$ is a 3-graph on $n$ vertices with edge weights $w$ the following hold.
\begin{enumerate}
    \item If $(G,w)$ is $(F_5, 3k+1)$-free or $(F_5, 3k+2)$-free and $w(G) >  k\binom{n}{3} - \delta n^3$, then $|\{e \in \binom{V(G)}{3} : w(e) \neq k\}| < \varepsilon n^3$. \label{thm:Stab3k+13k+2}

    \item If $(G,w)$ is $(F_5, 3k+3)$-free and $w(G) > (k + 2/9) \binom{n}{3} - \delta n^3$, then the following hold. \label{thm:Stab3k+3}
    \begin{enumerate}
        \item $|\{e \in \binom{V(G)}{3} : w(e) \geq k+2 \text{ or } w(e) \leq k-1\}| < \varepsilon n^3$. \label{thm:Stab3k+3i}
        \item There is a partition of the vertex set of $G$ as $V(G) = U_1 \cup U_2 \cup U_3$, so that all but at most $\varepsilon n^3$ edges of $G$ of weight $k+1$ or greater have one point in each $U_i$.\label{thm:Stab3k+3ii}  
    \end{enumerate}
\end{enumerate}
\end{theorem}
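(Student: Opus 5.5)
We would prove Theorem~\ref{thm:stabAll} by reducing it to the unweighted stability theorem for $F_5$ (Keevash--Mubayi) through a weight-class decomposition. The local engine is a counting inequality on copies of $F_5$: every copy has three edges, so if $(G,w)$ is $(F_5,t)$-free then, averaged over all copies of $F_5$ in $K_n^3$ (each triple lies in the same number of copies), the mean weight per edge is below $t/3$. This average alone gives $w(G) < \frac{t}{3}\binom{n}{3}$, which is too weak. The proof of Theorem~\ref{thm:all3asym} presumably refines it, and we would reuse that refinement in quantitative form. Concretely, we show that for $t=3k+1,3k+2$ and any configuration, the deficit $k\binom{n}{3}-w(G)$ is at least $c\cdot|\{e: w(e)\neq k\}|-o(n^3)$. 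For $t=3k+3$ we compare with the extremal weighting: $k$ everywhere plus $1$ on a tripartite $3$-graph.

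For part~\ref{thm:Stab3k+13k+2}, let $H_+=\{e:w(e)\ge k+1\}$ and $H_-=\{e:w(e)\le k-1\}$. Since $w(G)\le k\binom n3+(t-1-k)|H_+|-|H_-|$, a near-extremal weight forces $|H_+|$ to be at least about $|H_-|$. Now look at copies $\{abc,abd,cde\}$ whose three edges all lie in $H_+$. Such a copy has weight at least $3k+3>t-1$, which is impossible, so $H_+$ is $F_5$-free. More generally, an $F_5$-copy with two edges in $H_+$ forces the third edge to have weight at most $t-1-2(k+1)\le k-1$, so it lies in $H_-$. We would then run supersaturation plus removal: if $|H_+|\ge\varepsilon n^3$, then $H_+$ contains $\Omega(n^5)$ ``cherries'' $\{abc,abd\}$. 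The link structure forces $\Omega(n^3)$ triples $cde$ into $H_-$, and in fact enough of them, with appropriate weight loss, that the total exceeds the gain from $H_+$. This is the main obstacle: we need a clean charging scheme in which each heavy edge's excess is paid for by light edges. We would first try weighting each $F_5$-copy by its excess over $3k$, and use $\sum_{F_5}\sum_{e\in F_5}(w(e)-k)\le -c$ per copy when $t\le 3k+2$. If this fails directly, we would restrict to copies in which the shared pair $ab$ has high $H_+$-codegree.

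For part~\ref{thm:Stab3k+3}, the same argument with $t=3k+3$ shows that edges of weight $\ge k+2$ or $\le k-1$ come in forced combinations that cost weight. This gives~\ref{thm:Stab3k+3i}. After deleting $\varepsilon n^3$ edges we may therefore assume that all weights lie in $\{k,k+1\}$. Then $H=\{e:w(e)=k+1\}$ is $F_5$-free, since a copy inside $H$ would have weight $3k+3$, and $w(G)=k\binom n3+|H|$. The hypothesis gives $|H|>\frac29\binom n3-2\delta n^3$. Applying the removal lemma (to account for the deleted edges) together with the Keevash--Mubayi stability theorem for $F_5$ yields the tripartition $U_1\cup U_2\cup U_3$ of~\ref{thm:Stab3k+3ii}. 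Finally, we would choose $\delta$ small relative to the constants from the charging step and from Keevash--Mubayi stability.
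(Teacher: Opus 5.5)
\textbf{There is a genuine gap in parts~1 and~2(a).} The step that proves them is never carried out: you need to turn the local $F_5$ constraints into a global weight deficit without overcounting, and neither mechanism you suggest does this.

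\emph{The averaging try.} For $t=3k+2$ it has the wrong sign. A copy $F$ only satisfies $\sum_{e\in F}(w(e)-k)\le t-1-3k=1$, so averaging over copies yields just $w(G)\le(k+\frac13)\binom n3$.

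\emph{The cherry count.} It is quantitatively too weak. From $|H_+|\ge\varepsilon n^3$ you get $\Omega(\varepsilon^2n^4)$ cherries (a cherry spans only four vertices), hence $\Omega(\varepsilon^2n^5)$ copies with a light third edge. But each light triple is the third edge of $\Theta(n^2)$ copies, so you certify only $\Omega(\varepsilon^2n^3)$ light triples. For small $\varepsilon$ this is smaller than $|H_+|$ itself, let alone the possible gain $(2k+1)|H_+|$. Your phrase ``enough of them, with appropriate weight loss'' is exactly the unproved claim.

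\emph{Part~2(a).} ``The same argument'' hides a further difficulty. Triples of weight $k+1$ cost nothing, since they may fill a $\frac29$-fraction of all triples. So the charging must target only triples of weight $\ge k+2$ and measure against $k+\frac29$, which requires $\pi(F_5)$. Moreover, the local constraints around a triple of weight $k+1$ permit link weight $(3k+1)\binom{n-3}{2}$, i.e.\ $k+\frac13>k+\frac29$ per vertex.

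\textbf{The missing idea is the paper's iterative deletion,} in which every triple is charged once, at the stage it is deleted. For $t=3k+2$, delete a heavy triple $S=x_1x_2x_3$. For each pair $uv$ outside $S$, the copies $\{x_iuv,x_auv,S\}$ give $w(x_iuv)+w(x_auv)\le 2k$. Hence the triples meeting $S$ in one vertex weigh at most $3k\binom{n-3}{2}$ minus the total excess of the heavy ones among them. Take $S$ through a vertex of heavy degree $\Omega(\varepsilon n^2)$; this gives a deficit of $\Omega(\varepsilon n^2)$ per deletion. After $\Theta(\varepsilon n)$ deletions, bound the remainder by Theorem~\ref{thm:all3asym}. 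The total deficit is then $\Omega(\varepsilon^2n^3)$, which bounds $|H_+|$, and your inequality $w(G)\le k\binom n3+(t-1-k)|H_+|-|H_-|$ then bounds $|H_-|$.

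For $t=3k+3$ the paper chooses $S$ by Lemma~\ref{lem:dhruv}, with $|S|\in\{3,5,7\}$ and $\alpha/|S|\le\frac29$. Each stage is then measured against $k+\frac29$, and its deficit is at least the number of weight-$\ge k+2$ triples removed with $S$, up to $O(n)$. Summing gives a deficit of at least $|\{e:w(e)\ge k+2\}|-O(n^2)$. Deleting triples of weight $\ge k+2$ directly also works, since their links satisfy $w(x_iuv)+w(x_auv)\le 2k$.

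Your part~2(b) is essentially the paper's argument, except that no removal lemma is needed. The triples of weight $\ge k+1$ already form an $F_5$-free 3-graph, so once (a) bounds those of weight $\ge k+2$, Theorem~\ref{thm:F5stab} applies directly.
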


Using these stability theorems, we then are able to prove the exact Tur\'an numbers $\ex_{\mathbb{N}}(n, F_5, t)$ for large $n$. 
\begin{theorem}\label{thm:exactAll}
For every fixed integer $k \geq 0$, there exists some $N$ such that for all $n \geq N$,  
\begin{enumerate}
    \item $\ex_{\mathbb{N}}(n, F_5, 3k+1) = k \binom{n}{3}$, \label{thm:exact3k+1}
    \item $\ex_{\mathbb{N}}(n, F_5, 3k+2) = k \binom{n}{3} + \floor{\frac{n}{3}}$, \label{thm:exact3k+2}
    \item $\ex_{\mathbb{N}}(n, F_5, 3k+3) = k \binom{n}{3} + \ex(n, F_5)  = k \binom{n}{3} + s(n)$. \label{thm:exact3k+3}
\end{enumerate}
\end{theorem}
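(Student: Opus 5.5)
We prove the three parts in order of difficulty, using \cref{thm:stabAll} only for the third; the first two need only local counting.

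\textbf{Lower bounds.} For (1), give every triple weight $k$; any copy of $F_5$ has weight $3k<3k+1$. For (3), take $S(n)$ and give its edges weight $k+1$ and all other triples weight $k$. Since $S(n)$ is $F_5$-free, every copy of $F_5$ contains at most two edges of $S(n)$, so its weight is at most $3k+2$. For (2), start from weight $k$ everywhere and choose $\floor{n/3}$ vertex-disjoint triples, raising each to weight $k+1$. Two edges of a copy of $F_5$ always share at least one vertex, so at most one raised triple lies in the copy, and its weight is at most $3k+1<3k+2$.

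\textbf{Upper bounds for (1) and (2).} For (1), note that $\{123,124,345\}$ has the property that every triple of $K_n^3$ lies in many copies of $F_5$. Each 5-set supports a fixed number of copies of $F_5$, and each copy is an ordered selection of 3 edges. Averaging the constraint $w(F)\le 3k$ over all copies of $F_5$ on a fixed 5-set, by symmetry each triple is counted equally often, so $\sum_{e\subset S}w(e)\le k\binom53$ for every 5-set $S$. Summing over $S$ gives $w(G)\le k\binom n3$.

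For (2) the constraint is $w(F)\le 3k+1$. Call a triple \emph{heavy} if $w(e)\ge k+1$. First show that two heavy triples sharing exactly one or two vertices can both be placed in a copy of $F_5$, with the third edge completing it. The third edge has weight at least $0$, so this averaging does not immediately give a contradiction. Instead, compare with the excess $w(e)-k$ over all triples: define $x(e)=w(e)-k$ and show $\sum x(e)\le \floor{n/3}$. Every copy has $x(F)\le 1$. I would show that the heavy triples form a matching, and that each vertex-disjoint pair of them forces enough negative excess elsewhere. More precisely, I would run a discharging argument: each triple with $x(e)=j\ge 1$ lies in copies of $F_5$ with two other triples whose total $x$ is at most $1-j$. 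Distributing this deficit over the neighbouring triples should show that the total positive excess, after cancelling against negative excess, is at most one unit per three vertices. This is the delicate part. I expect to need the exact structure: a vertex lying in two heavy triples forces a deficit of order $n$, which outweighs the gain.

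\textbf{Upper bound for (3), and the main obstacle.} The plan is the standard stability-then-cleaning argument. Suppose $w(G)\ge k\binom n3+s(n)$. First reduce to minimum-degree conditions by deleting vertices of low weighted degree; this is standard, since otherwise the density exceeds the bound for smaller $n$ via \cref{thm:all3asym}.

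Apply \cref{thm:stabAll}(2) to obtain a partition $U_1\cup U_2\cup U_3$ such that almost all triples have weight $k$ or $k+1$, and almost all $(k+1)$-triples are crossing. Then move each vertex to its best part. A misplaced heavy triple (non-crossing, or of weight $\ge k+2$), together with the many crossing $(k+1)$-triples around it, forms copies of $F_5$ of weight $\ge 3k+3$ unless many nearby triples have weight below $k$. Counting these losses against the gains shows that the optimum is attained when all non-crossing triples have weight exactly $k$ and crossing triples have weight at most $k+1$. Hence $w(G)\le k\binom n3+s(n)$.

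The main obstacle is this cleaning step. Weights can exceed $k+1$ or fall below $k$, so we must carefully bound the trade-off in which one triple of weight $k+2$ forces about $n$ triples down to weight $k-1$. The same local-deficit accounting as in (2) should handle it.
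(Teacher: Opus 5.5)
Your lower bounds are fine, and your upper bound for (1) is correct and genuinely different from the paper's. On a fixed 5-set there are $30$ copies of $F_5$ (since $|\mathrm{Aut}(F_5)|=4$), and by symmetry each of the $10$ triples lies in exactly $9$ of them. Hence $9\sum_{e\subset S}w(e)\le 30\cdot 3k$, i.e.\ $\sum_{e\subset S}w(e)\le 10k$, and summing over all 5-sets gives $w(G)\le k\binom{n}{3}$. This holds for every $n\ge 5$ and also gives $\pi_{\mathbb N}(F_5,3k+1)\le k$ directly. The paper instead applies Theorem~\ref{thm:stabAll} and then counts low-weight triples around a vertex of large heavy degree, which only works for large $n$.

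Part (2), however, is not proved: you give no charging rule, and the structural facts you intend to use are false. For $k\ge 1$, heavy triples need not form a matching, and two heavy triples meeting in one vertex force only an $O(1)$ deficit, not one of order $n$. Give every triple weight $k$ except $w(xyz)=w(xab)=k+1$ and $w(yza)=w(yzb)=w(aby)=w(abz)=k-1$. Every copy of $F_5$ containing both heavy triples has its third edge among these four, so this weighting is $(F_5,3k+2)$-free. Moreover, a single deficient triple such as $yza$ is the third edge for $\Theta(n^2)$ pairs of heavy triples, so deficits cannot simply be distributed locally. Any matching structure must come from extremality, which is the whole difficulty.

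The missing idea is the paper's peeling recursion. If $S=xyz$ has weight $k+1+j$, then $w(xuv)+w(yuv)\le 2k-j$ for all $u,v\notin S$ (and symmetrically), while $w(xya)+w(zab)\le 2k-j$ controls the triples meeting $S$ in two vertices. Together these give $\ex_{\mathbb N}(n,F_5,3k+2)\le k\bigl[\binom{n}{3}-\binom{n-3}{3}\bigr]+1+\ex_{\mathbb N}(n-3,F_5,3k+2)$ whenever a heavy triple exists. One then computes the optimum $k\binom{n}{3}+\floor{n/3}$ exactly for weightings whose heavy triples are pairwise disjoint. If an extremal weighting has two intersecting heavy triples, taking $S$ to be one of them loses at least $1$ more in the recursion. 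So the excess $h(n)$ over the disjoint optimum satisfies $h(n)\le h(n-3)-1$ unless $h(n)=0$, forcing $h(n)=0$ for large $n$.

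For (3), your outline (stability against the tripartite template, then cleaning) is the same kind of argument as the paper's. But the cleaning step you defer is the entire proof, and it cannot come from the accounting for (2), which you have not supplied. Your scale is also off. Take a crossing triple $xyz$ of weight at least $k+2$ with $x\in U_1$, $y\in U_2$, $z\in U_3$. For every $a\in U_2\setminus\{y\}$ and $b\in U_3\setminus\{z\}$, it forces $w(abx)\le k$ or one of the non-crossing triples $aby$, $abz$ to have weight at most $k-1$. That is $\Theta(n^2)$ deficient triples, not about $n$, and this quadratic count is what the argument needs.

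The paper proceeds as follows. If $w(G)$ exceeds the template, then $(2k+2)(|B_1|+|B_2|)>|M_1|+|M_2|$ for the bad and deficient triples. Every bad triple has a vertex lying in $\Omega(n^2)$ deficient triples, so averaging yields a vertex $u$ lying in $\Omega(n^2)$ bad triples. Apart from one type treated below, each of these forces $\Omega(n^2)$ deficient triples avoiding $u$, each counted at most $3n$ times. Thus $|M_1|+|M_2|\ge c_k n^3$, hence $|B_1|+|B_2|\ge c_k n^3/(2k+2)$, contradicting Theorem~\ref{thm:stabAll} once $\varepsilon$ is small in terms of $k$. The exceptional type is a non-crossing bad triple $uxy$ with $x$ in the class of $u$ and $y$ elsewhere. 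Choosing the partition that maximizes heavy crossing triples bounds their number by twice the number of heavy crossing triples $uab$ at $u$, and each such $uab$ then forces $xab$ to be deficient. Your minimum-degree reduction and vertex moving might replace this, but as written nothing controls the trade-offs you identify.
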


The tetrahedron, denoted by $K_4^3$, is the complete 3-uniform hypergraph on 4 vertices.
Determining $\ex(n, K_4^3)$ is a notoriously hard problem. 
There is a construction showing $\pi(K_4^3) \geq 5/9$ and computations by flag algebras~\cite{berlinK4} show $\pi(K_4^3) \leq 0.55733$.
A Lean-verified bound~\cite{jeong2026newupperboundturan} is $\pi(K_4^3) \leq 0.557789$.
Denote by $K_4^{3-}$ the 3-graph obtained from the tetrahedron by removing one edge. 
The best known bounds in this case are $2/7 \leq \pi(K_4^{3-}) \leq 0.2865$, where the upper bound was again obtained by flag algebras~\cite{lpsdp2026}.
With the current understanding of $\pi(K_4^3)$ and $\pi(K_4^{3-})$, it is hopeless to determine $\pi_\mathbb{N}(K_4^3, t)$ for all $t \in \mathbb{N}$. 
However, we pose the following conjecture.
\begin{conjecture}
For every integer $k \geq 0$,
\[
\pi_{\mathbb{N}}(K_4^3,4k+1) = \pi_{\mathbb{N}}(K_4^3,4k+2) = k,
\]
\[
\pi_{\mathbb{N}}(K_4^3,4k+3) = k + \pi(K_4^{3-}) \quad\text{ and } \quad 
\pi_{\mathbb{N}}(K_4^3,4k+4) = k + \pi(K_4^3).
\]
\end{conjecture}
We present supporting evidence for the conjecture.
The constructions give each edge weight $k$ or $k+1$, where edges of weight $k+1$ either do not exist or form a $K_4^{3-}$-free or $K_4^3$-free 3-graph, respectively.
The matching upper bounds for $4k+1$ are straightforward to prove.
The equality $\pi_{\mathbb{N}}(K_4^3,2)=0$ follows from the fact that the edges must form a linear hypergraph. 
The cases
$\pi_{\mathbb{N}}(K_4^3,3) = \pi(K_4^{3-})$
and
$\pi_{\mathbb{N}}(K_4^3,4) = \pi(K_4^{3})$ follow from the fact that in these cases edges of weight at least 2 must form a linear hypergraph.
For the next three nontrivial cases, we used flag algebra calculations on weighted 3-graphs to obtain the following upper bounds that seem to support the conjecture:
\[
\pi_{\mathbb{N}}(K_4^3,6) \leq 1.0096342,
\quad\quad
\pi_{\mathbb{N}}(K_4^3,7) \leq 1.3014595,
\quad\quad
\pi_{\mathbb{N}}(K_4^3,8) \leq 1.5619373.
\]
Perhaps the most attractive conjecture is $\pi_{\mathbb{N}}(K_4^3,6) =1$.

\section{Preliminaries}

For a 3-graph $G$ and its vertex $v$, we denote by $d(v)$ the degree of $v$, that is, the number of edges of $G$ containing $v$.
We restate unweighted extremal results on $F_5$ that we will need. 

\begin{lemma}[Keevash and Mubayi~\cite{dhruvF5Paper}]\label{lem:dhruv}
   If $H$ is a nonempty $F_5$-free 3-graph on $n \geq 7$ vertices, then $H$ contains a set of vertices $S$ with $|S| \in \{3,5,7\}$ such that every pair of vertices in $S$ is contained in an edge of $H[S]$ and
   \[
        \sum_{x \in S}d(x)
        \leq 
        \alpha \binom{n - |S|}{2} + O(n),
        \text{ where }
        \alpha = 
        \left\{\begin{aligned}
        2/3 & \text{ if } |S|=3\\
        1 & \text{ if } |S|=5\\
        1 & \text{ if } |S|=7
        \end{aligned}
        \right.
    \]
\end{lemma}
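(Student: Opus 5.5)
The plan is to first extract a general principle from $F_5$-freeness, and then use it to reduce the lemma to a dichotomy for a single edge. The principle is this: suppose $S$ is any vertex set in which every pair is covered by an edge of $H[S]$. Then every pair $yz$ of vertices outside $S$ lies in at most one edge of the form $xyz$ with $x\in S$. Indeed, suppose $x_1yz$ and $x_2yz$ are both edges with $x_1,x_2\in S$. The pair $x_1x_2$ is covered by some edge $x_1x_2s$ with $s\in S$, so $s\notin\{y,z\}$. Then $\{x_1x_2s,\,x_1yz,\,x_2yz\}$ is a copy of $F_5$, since two edges sharing the pair $yz$ with private vertices $x_1,x_2$, together with an edge through $x_1x_2$ avoiding $y,z$, is exactly $F_5$.

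Now write $\sum_{x\in S}d(x)$ as the number of edges meeting $S$ in exactly one vertex plus $O(n)$, where the $O(n)$ accounts for the edges meeting $S$ in at least two vertices (there are $O(|S|^2 n)$ of them and $|S|\le 7$). The principle then gives $\sum_{x\in S}d(x)\le \binom{n-|S|}{2}+O(n)$ for every such $S$. So the bound with $\alpha=1$ holds automatically for any covered $S$ of size $5$ or $7$. The whole difficulty is therefore to find either a single edge $S=abc$ achieving the constant $2/3$, or a covered set of size $5$ or $7$.

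To get this dichotomy, fix an edge $abc$ and let $V'=V\setminus\{a,b,c\}$. Define the restricted links $L_a,L_b,L_c$ on $V'$, where $L_a$ consists of the pairs $yz\subseteq V'$ with $ayz\in E(H)$, and similarly for $b,c$. By the principle, these three graphs are pairwise edge-disjoint. If $|L_a|+|L_b|+|L_c|\le \frac23\binom{n-3}{2}$, then $S=\{a,b,c\}$ works. Before fixing the edge, I would in fact choose $abc$ to minimize $d(a)+d(b)+d(c)$, so that in the bad case every edge has large degree sum. This gives a lot of room for later averaging.

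In the remaining case the three edge-disjoint graphs $L_a,L_b,L_c$ together cover more than $2/3$ of the pairs of $V'$. Here I would use a Mantel/Gallai-type counting statement: a $3$-edge-coloured graph on $m$ vertices with more than $\frac23\binom m2$ edges contains many rainbow triangles $yzw$, say with $yz\in L_a$, $zw\in L_b$, $wy\in L_c$. Such a triangle produces the edges $ayz$, $bzw$, $cwy$ and $abc$, and every pair among $\{a,b,c,y,z,w\}$ is then covered except $aw$, $bx$-type pairs such as $by$, and $cz$. I would then try to add one more vertex, or swap vertices, to close these gaps. One way is to use a second rainbow triangle sharing a vertex. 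Another is to use the minimality of $d(a)+d(b)+d(c)$ applied to the new edges $ayz$, $bzw$, $cwy$. In either case the aim is to obtain a covered set of size $5$, for instance from a configuration like $\{a,b,c,y,z\}$ with additional edges through $bc$, or of size $7$, for instance $\{a,b,c,y,z,w,u\}$.

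I expect this last step to be the main obstacle. It requires turning the density condition $>2/3$ into a covered $5$- or $7$-set, and it requires checking that the new edges are consistent with $F_5$-freeness; this is where the case analysis producing exactly the sizes $\{3,5,7\}$ should come from. If the rainbow-triangle route stalls, I would instead argue by contradiction using a maximal covered set $S$. If $S$ cannot be enlarged, then every outside pair sees at most one vertex of $S$, and every outside vertex misses some pair. Averaging this over $S$ should recover the $2/3$ bound for a suitably chosen edge inside $S$.
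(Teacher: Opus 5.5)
Your opening is sound and matches the paper: the covering principle for a set $S$ all of whose pairs lie in edges of $H[S]$, the resulting $\binom{n-|S|}{2}+O(n)$ bound, and the edge-disjointness of the restricted links $L_a,L_b,L_c$ are exactly what the paper uses. The gap is in the step you flag yourself.

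\textbf{The rainbow-triangle step fails.} The claim that a $3$-edge-coloured graph with more than $\frac23\binom m2$ edges contains many rainbow triangles is false in general. It fails in exactly the situation you need. Take the full star at $a$ (all triples containing $a$), which is $F_5$-free. Every edge has the form $abc$, and then $L_a$ is complete on $V'$ while $L_b=L_c=\emptyset$. So there are no rainbow triangles at all. Choosing $abc$ to minimise $d(a)+d(b)+d(c)$ does not help, since all edges have the same degree sum. Moreover, even when a rainbow triangle $yzw$ exists, you give no mechanism for covering the missing pairs $aw$, $by$, $cz$. The fallback via a maximal covered set is a hope, not an argument.

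\textbf{The missing idea} is to look for a $K_4$ rather than a triangle, and to use $F_5$-freeness on the colouring.
\begin{enumerate}
\item Use the threshold $t_3(n-3)$ instead of $\frac23\binom{n-3}{2}$; the two differ by $O(n)$, which is absorbed. If $|L_a|+|L_b|+|L_c|>t_3(n-3)$, Tur\'an's theorem gives a $K_4$ on some $\{p,q,r,s\}\subseteq V'$ in $L_a\cup L_b\cup L_c$.
\item A triangle with exactly two colours is impossible. If $pq,qr\in L_a$ and $pr\in L_b$, then $apq,\,aqr,\,bpr$ is a copy of $F_5$. Hence every triangle of this $K_4$ is rainbow or monochromatic.
\item So the $K_4$ is either monochromatic, say in colour $a$, giving the covered $5$-set $\{a,p,q,r,s\}$. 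Or it is properly $3$-edge-coloured. In that case $abc$ together with the six triples $\{v\}\cup e$, for $e$ an edge of the $K_4$ of colour $v\in\{a,b,c\}$, form a Fano plane on $\{a,b,c,p,q,r,s\}$, which is a covered $7$-set.
\end{enumerate}

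This fourth vertex is exactly what closes your three gaps. If $yz\in L_a$, $zw\in L_b$, $wy\in L_c$ and $u$ completes a properly coloured $K_4$, then $uw\in L_a$, $uy\in L_b$, $uz\in L_c$. These give the triples $auw$, $buy$, $cuz$. Your covering principle then finishes the proof.
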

\begin{proof}
Let $H$ be an $F_5$-free 3-graph. Let $t_3(n-3) = \frac{2}{3} \binom{n-3}{2} + O(n)$ be the number of edges in a complete tripartite graph on $n-3$ vertices. 

If there is any edge $e = x y z$ with at most $t_3(n-3)$ edges which intersect $e$ in exactly 1 vertex, then we let $S = xyz$. Then $\sum_{x \in S} d(x) \leq \frac{2}{3} \binom{n-3}{2} + O(n)$, since the number of edges that intersect $e$ in 2 or 3 vertices is at most $O(n)$ as well.

Otherwise, $e$ has at least $t_3(n-3) + 1$ edges incident to it in exactly 1 vertex. Let $L(x)$, $L(y)$, and $L(z)$ denote the link graphs of these vertices. Define an auxiliary graph $G$ with vertex set $V(H) \setminus e$ and edge set $L(x) \cup L(y) \cup L(z)$ restricted to $V(H) \setminus e$. Suppose these link graphs are not edge disjoint. Then without loss of generality, there are some distinct vertices $a,b \in V(H) \setminus e$ where $abx$ and $aby$ are both edges. This contradicts $H$ being $F_5$-free, as it then contains an $F_5$ with edges $abx$, $aby$, and $xyz$. Hence the link graphs  when restricted to $V(H) \setminus e$ must be edge disjoint.  

We can then 3-color the edges of $G$ according to the link graph containing them. Note edges in $G$ correspond to edges incident to $e$ in exactly 1 vertex. Hence $G$ has at least $t_3(n-3) + 1$ edges, which implies it contains some $K_4$.

Suppose this $K_4$ has vertices $abcd$. For the sake of contradiction, suppose the edges of triangle $abc$ have exactly 2 colors. Without loss of generality, suppose $ab, bc \in L(x)$ while $ac \in L(y)$. Then $abcxy$ will form an $F_5$ with edges $bxa$, $bxc$, and $acy$. Thus any triangle in this $K_4$ must be either rainbow or monochromatic. This in turn implies $K_4$ is either properly colored or is monochromatic. 

If $K_4$ is properly colored, let $S = abcdxyz$.  If $K_4$ is monochromatic, let $v \in \{x,y,z\}$ be the vertex that defines the color of the $K_4$. Then let $S = abcdv$. Note in both cases, every pair of vertices in $S$ has an edge containing it in $S$.

We now verify $\sum_{x \in S} d(x) \leq \binom{n - |S|}{2} + O(n)$ in both cases. Given any distinct $p,q \in V(H) \setminus S$ and distinct $r,s \in S$, we cannot have both edges $pqr$ and $pqs$ in $H$ at the same time. By our previous observations, there is some edge $rst$ for some distinct $t \in S$. Hence this would form an $F_5$ with edges $pqr$, $pqs$, $rst$, a contradiction. Thus any pair of vertices outside of $S$ is contained in at most 1 edge incident to $S$. Hence $\sum_{x \in S} d(x) \leq \binom{n - |S|}{2} + O(n)$ as claimed.
\end{proof}
We also use the previously known $F_5$ Tur\'an results.
\begin{theorem}[Frankl and F\"uredi~\cite{originalF5result}, Keevash and Mubayi~\cite{dhruvF5Paper}]\label{thm:F5asym}
$\pi(F_5) = \frac{2}{9}$. For $n \geq 33$, $\ex(n, F_5) = s(n)$. 
\end{theorem}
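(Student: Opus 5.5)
The lower bound in both assertions comes from the construction $S(n)$. Suppose $S(n)$ contained a copy of $F_5$ with edges $123,124,345$. The edges $123$ and $124$ share the pair $12$, and in a complete tripartite 3-graph a pair lies in an edge only if its two vertices are in different parts. The third vertex of any such edge must then lie in the remaining part, so $3$ and $4$ are in the same part. But then $345$ is not an edge, a contradiction. Hence $S(n)$ is $F_5$-free, which gives $\ex(n,F_5)\ge s(n)$ and $\pi(F_5)\ge 2/9$.

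For the asymptotic upper bound I would run an induction using Lemma~\ref{lem:dhruv}. I want to show that $\ex(n,F_5)\le s(n)+Cn^2$ for a suitable constant $C$. If $H$ is $F_5$-free, nonempty and $n\ge 7$, the lemma supplies a set $S$ with $|S|\in\{3,5,7\}$. Deleting $S$ removes at most $\sum_{x\in S}d(x)\le \alpha\binom{n-|S|}{2}+O(n)$ edges, and $H-S$ is still $F_5$-free. Compare this with the target increment: $s(n)-s(n-m)=m\cdot\frac{n^2}{9}+O(n)$. For $m=3$ this is $\frac{n^2}{3}+O(n)$, which matches $\frac23\binom{n}{2}$ to first order. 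For $m=5,7$ it is $\frac{5n^2}{9}$ or $\frac{7n^2}{9}$, which strictly exceeds $\binom{n}{2}$, so there is room to spare. Thus
\[
e(H)\le \ex(n-|S|,F_5)+s(n)-s(n-|S|)+O(n).
\]
Iterating, the linear error terms accumulate over about $n/3$ steps to $O(n^2)=o(n^3)$. Dividing by $\binom n3$ gives $\pi(F_5)\le 2/9$.

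The exact statement $\ex(n,F_5)=s(n)$ is the main obstacle, because the $O(n)$ slack above destroys exactness. My plan would follow Keevash and Mubayi and proceed in three steps.

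\begin{enumerate}
\item \emph{Stability.} Show that an $F_5$-free $H$ with $e(H)\ge s(n)-\delta n^3$ is $\varepsilon n^3$-close to a complete tripartite 3-graph. I would get this from the same deletion argument: along the way, almost every step must be a tight case $|S|=3$ with nearly tripartite behaviour of the links of the three deleted vertices.
\item \emph{Minimum degree reduction.} Delete vertices of low degree one at a time. A minimal counterexample can then be assumed to have minimum degree at least $\frac{n^2}{9}-o(n^2)$.
\item \emph{Cleaning.} Take a max-cut-type tripartition $U_1\cup U_2\cup U_3$. Show that any edge not crossing all three parts, together with the high minimum degree, forces an $F_5$: two crossing edges sharing a pair plus a bad edge. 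Conclude that $H$ is tripartite, so $e(H)\le s(n)$.
\end{enumerate}

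Bringing the threshold down to $n\ge 33$ would require careful explicit bookkeeping of the constants in Lemma~\ref{lem:dhruv}. I expect this, and the cleaning step, to be the hardest part. In this paper I would simply cite \cite{originalF5result,dhruvF5Paper} for that explicit threshold.
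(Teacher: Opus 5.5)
The paper gives no proof of this statement; it is quoted from \cite{originalF5result,dhruvF5Paper}, so your proposal does strictly more than the paper.

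Your check that $S(n)$ is $F_5$-free is correct. Your derivation of $\pi(F_5)\le 2/9$ is valid and non-circular, because the proof of Lemma~\ref{lem:dhruv} uses only Tur\'an's theorem for $K_4$ in graphs and $F_5$-freeness. In fact your deletion induction is exactly the $k=0$ instance of Case~III in the proof of Theorem~\ref{thm:all3asym}: your comparison $\alpha\binom{n-|S|}{2}\le s(n)-s(n-|S|)+O(n)$ plays the role of $\alpha/|S|\le 2/9$ there. The paper runs that induction only for $k\ge 1$ and settles $c=3$ by quoting this theorem. Your argument therefore shows that the $c=3$ case, and the density half of this theorem, follow from Lemma~\ref{lem:dhruv} alone.

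What your route does not deliver is the exact result. The three-step outline is a plan, not a proof, and step~1 has no working mechanism as stated. Near-tightness at a step with $|S|=3$ only says that the union of the three edge-disjoint links of the deleted edge has about $t_3(n-3)$ edges. That count does not by itself force those links to be close to complete bipartite graphs: links in $F_5$-free 3-graphs can be far from bipartite, since the full star at a vertex is $F_5$-free. So stability needs a genuinely global argument, which is what Theorem~\ref{thm:F5stab} imports from \cite{dhruvF5Paper}. Even if steps 1--3 were completed, they would give $\ex(n,F_5)=s(n)$ only for sufficiently large $n$. The claim for all $n\ge 33$ therefore rests on the citation, exactly as in the paper.
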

\begin{theorem}[Keevash and Mubayi~\cite{dhruvF5Paper}]\label{thm:F5stab}
For any $\varepsilon > 0$, there exists $\delta > 0$ and $N$ such that if $H$ is an $F_5$-free 3-graph on $n > N$ vertices with at least $(1 - \delta) s(n)$ edges, then there is a partition of the vertex set of $H$ as $V(H) = U_1 \cup U_2 \cup U_3$, so that all but at most $\varepsilon n^3$ edges of $H$ have one point in each $U_i$.
\end{theorem}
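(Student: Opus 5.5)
The statement just given is Theorem~\ref{thm:F5stab}, the stability theorem of Keevash and Mubayi. Since the paper cites it as a known result, the natural ``proof'' is a reference to~\cite{dhruvF5Paper}. If I were to reprove it, I would follow the standard stability route, built on Lemma~\ref{lem:dhruv} and Theorem~\ref{thm:F5asym}.

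\textbf{Step 1: clean the graph.} Take $H$ to be $F_5$-free with at least $(1-\delta)s(n)$ edges. Repeatedly delete vertices of degree at most $(\frac{2}{9}\cdot 3 - \sqrt{\delta})\binom{n}{2}/... $; more precisely, delete any vertex whose degree is noticeably below $\frac{1}{3}\cdot\frac{2}{3}\binom{n}{2}$. Each deletion raises the edge density relative to $\frac{2}{9}\binom{n'}{3}$. By Theorem~\ref{thm:F5asym} the density cannot exceed $\frac{2}{9}+o(1)$, so only $O(\sqrt{\delta}n)$ vertices are removed. This leaves a subgraph $H'$ with minimum degree at least $(\frac{2}{9}-\gamma)\cdot\frac{3}{2}\cdot\frac{n^2}{3}\approx(\frac{1}{9}-\gamma)n^2$ that still has almost all the edges.

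\textbf{Step 2: find an edge with small link interaction.} Apply Lemma~\ref{lem:dhruv} to $H'$. It gives a set $S$ with $|S|\in\{3,5,7\}$ and $\sum_{x\in S}d(x)\le\alpha\binom{n-|S|}{2}+O(n)$. If $|S|\in\{5,7\}$, the minimum degree forces $\sum d(x)\ge|S|(\frac19-\gamma)n^2\ge 5n^2/9-O(\gamma n^2)>\frac12 n^2$, which is a contradiction. So $|S|=3$ and $S=xyz$ is an edge. Moreover, the degree sum is squeezed: $3(\frac19-\gamma)n^2\le\sum d\le\frac13 n^2+O(n)$. Hence the links $L(x),L(y),L(z)$ on $V\setminus S$ are edge-disjoint (by the argument in the proof of Lemma~\ref{lem:dhruv}), and their union has nearly $t_3(n)$ edges. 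The union is also $K_4$-free in the relevant colouring sense, since a monochromatic or proper $K_4$ would produce a $5$- or $7$-set contradicting the degree bounds just as above.

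\textbf{Step 3: build the partition.} By graph Tur\'an stability (Erd\H{o}s--Simonovits), the union $L(x)\cup L(y)\cup L(z)$ is $\gamma'$-close to a complete tripartite graph with parts $W_1,W_2,W_3$. The no-two-coloured-triangle property forces each link to be essentially the bipartite graph between two of the parts. Set $U_i=W_i$, with $x,y,z$ added to the appropriate parts. Then every edge of $H$ with two vertices in one $U_i$ combines with the near-complete links to create an $F_5$ in many ways. A counting argument shows there are at most $\varepsilon n^3$ such edges, and together with the discarded vertices this gives the claim.

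\textbf{Main obstacle.} Step 3 is where I expect the difficulty: converting near-tripartite links at three vertices into a global statement about all edges. I would first try the following counting. An edge $abc$ with $a,b\in U_1$ lies in an $F_5$ with $abu$, $abv$ for suitable $u,v$ whenever the codegree of $ab$ is at least $2$; this is forbidden, so such pairs have tiny codegree. Summing over these pairs gives only $o(n^3)$ bad edges.
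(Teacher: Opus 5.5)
Citing Keevash and Mubayi is all the paper does here: Theorem~\ref{thm:F5stab} is quoted without proof, so as a reference your answer agrees with it. Your self-contained sketch, however, has a genuine gap in Step~3.

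Steps~1 and~2 are sound. The cleaning works via Theorem~\ref{thm:F5asym}, and $5/9>1/2$ excludes $|S|\in\{5,7\}$. For an edge $xyz$ of the cleaned graph, the restricted links are edge-disjoint with a $K_4$-free union of about $n^2/3$ edges, hence close to complete tripartite on $W_1\cup W_2\cup W_3$.

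The problem is your assertion that the absence of two-coloured triangles forces each link to be essentially the bipartite graph between two of the parts. This is false. Split $W_1=A\cup B$ and let $L(x)=W_2\times W_3$, $L(y)=(A\times W_2)\cup(B\times W_3)$ and $L(z)=(A\times W_3)\cup(B\times W_2)$. The links are edge-disjoint, their union is complete tripartite, every triangle is rainbow, and with balanced parts each link has about $n^2/9$ edges. So everything you use is satisfied. Yet no $a\in A$ and $p\in B$ have a common neighbour in any of the three links. Your argument therefore cannot exclude edges meeting both $A$ and $B$, and $U_i=W_i$ is not justified.

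Such configurations are in fact impossible, but only because of a further use of $F_5$-freeness together with the minimum degree at vertices of $W_1$. For $a\in A$ and $b\in W_2$, any edge $abv$ with $v\neq y$ gives the $F_5$ $\{aby,abv,yvw\}$ whenever $v$ has an $L(y)$-edge $vw$ avoiding $a,b$; here this holds for every $v\notin\{a,b,y\}$. Treating the other pairs through $a$ similarly gives $d(a)=o(n^2)$, a contradiction. What is missing is a general lemma of this type, showing that each restricted link is $o(n^2)$-close to a complete bipartite graph between two of the $W_i$. The colouring condition and the edge counts alone do not give it.

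Your proposed resolution of the main obstacle also misreads $F_5$. Three edges through a common pair $ab$ do not form an $F_5$, and codegree at least $2$ is not forbidden: cross pairs of $S(n)$ have codegree about $n/3$. So that count proves nothing.

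The right mechanism, once the links are known to be canonical (say $L(y)\approx W_1\times W_3$), is as follows. Two typical $a,b\in W_1$ have a common $L(y)$-neighbour $c'$, and then $\{yac',ybc',abc\}$ is an $F_5$ for every $c\notin\{y,c'\}$. The other parts are handled the same way via $L(z)$ or $L(x)$. Hence every edge with two vertices in one part contains a deleted vertex, an atypical vertex, or one of $x,y,z$. This gives the required $\varepsilon n^3$ bound.
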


\section{Proof of Theorem~\ref{thm:all3asym}}
We prove the asymptotic result from Theorem~\ref{thm:all3asym} in this section. 

\begin{proof}
First, we prove lower bounds with three constructions. We will later show these constructions are in fact exactly optimal. For the $3k+1$ case, consider a 3-graph on $n$ vertices with every edge having weight $k$. Then every $F_5$ has weight $3k$, so $\pi_{\mathbb{N}}(F_5, 3k+1) \geq k$. For the $3k+2$ case, we use a maximum matching on $n$ vertices. We assign these edges weight $k+1$, while all other edges are assigned weight $k$. Then every $F_5$ has weight at most $3k+1$, so $\pi_{\mathbb{N}}(F_5, 3k+2) \geq k$. For the $3k+3$ case, consider an optimal $F_5$-free 3-graph. Assign the edges of this 3-graph weight $k+1$, and assign all other non-edges weight $k$. This will be $(F_5, 3k+3)$-free, as the edges of weight $k+1$ came from a 3-graph with no copies of $F_5$. By Theorem~\ref{thm:F5asym}, we get $\pi_{\mathbb{N}}(F_5, 3k+3) \geq k + 2/9$. 

We now upper bound the densities. Let $c \in \{3k+1, 3k+2, 3k+3\}$ be fixed and $(G, w)$ be any weighted 3-graph on $n$ vertices such that every copy of $F_5$ has weight less than $c$. We first consider small values of $c$. If $c = 1$, then every edge must have weight $0$, so $w(G) \leq 0$. If $c = 2$, every edge of positive weight must be disjoint and each edge has weight at most $1$. Thus $w(G) \leq O(n)$. If $c = 3$, then each edge has weight at most 2. The edges of weight 2 must be disjoint, so these edges will contribute at most $O(n)$ total weight. On the other hand, let $H$ be the hypergraph containing the edges of weight $1$. $H$ must be $F_5$-free, so by Theorem~\ref{thm:F5asym}, $e(H) \leq \frac{2}{9} \binom{n}{3} + O(n^2)$. Hence $w(G) \leq e(H) + O(n) \leq \frac{2}{9} \binom{n}{3} + O(n^2)$. If $c = 4$, then each edge has weight at most 3. The edges of weight at least 2 must be disjoint from each other, so they contribute at most $O(n)$ weight. If every edge has weight 1, they contribute $\binom{n}{3}$ weight, so overall we get $w(G) \leq \binom{n}{3} + O(n)$ in this case. 

We now prove upper bounds for the remaining values of $c$. Let $C$ be a constant such that the $O(n)$ term from Lemma~\ref{lem:dhruv} is upper bounded by $Cn$ for all positive values of $n$. Let 
\[
M_c = 49 + 56c +  C + \sum_{i=5}^{12} \ex_{\mathbb{N}}(i, F_5, c). 
\]
For $c= 3k+1$ with $k \geq 2$ we prove $w(G) \leq k \binom{n}{3} + M_{3k+1} n^2$ by induction on the number of vertices. Similarly, for $c = 3k+2$ with $k \geq 1$, we show $w(G) \leq k \binom{n}{3} + M_{3k+2} n^2$. Lastly, for $c= 3k+3$ with $k \geq 1$, we show $w(G) \leq (k + 2/9) \binom{n}{3} + M_{3k+3} n^2$. These will imply the corresponding statements for $\pi_{\mathbb N}(F_5, c)$. For the base cases, we consider $5 \leq n \leq 12$ and note the $M_c$ coefficient proves the results. 

For the induction step, suppose the bound is true for all 3-graphs on $m < n$ vertices, and then suppose $G$ has $n$ vertices. If all edges have weight at most $k$, we are done since this implies $w(G) \leq k \binom{n}{3}$. So we suppose this is not the case.

Let $H$ be an unweighted 3-graph such that $e \in H$ if and only if $w(e) \geq k+1$. Since $H$ must be $F_5$-free,  we can apply Lemma~\ref{lem:dhruv} to find some set of vertices $S = \{x_1, \dots, x_{|S|}\}$.
We will count the weight of the edges incident to $S$. Then we will remove $S$ from $G$ and continue by induction to count the weight of $G$.

We will first count the weight of edges with exactly 1 vertex in $S$. These edges must have weight at most $c- 1 - (k+1)$, as each vertex in $S$ is contained in an edge of weight at least $k+1$.
For each $i$, let $d_j(x_i)$ be the number of edges of weight $j$ incident to $x_i$ and two vertices in $G \setminus S$. Then the weight of edges incident to vertices in $S$ is 
\[
\sum_{x_i \in S} \sum_{j \geq 1} jd_j(x_i).
\] 
This sum can be rewritten in the following form.
\begin{align}\label{eq:big}
\sum_{\ell = 1}^{c-1-(k+1)} \left( \sum_{x_i \in S} \sum_{j \geq \ell} d_j(x_i) \right)
\end{align}

We upper bound $\sum_{j \geq \ell} d_j(x_i)$ for $\ell \leq k$. Suppose $x_a, x_i \in S$ are distinct vertices such that $w(x_a u v) \geq c - (k+1) - \ell$ for some distinct $u,v \in G \setminus S$. By Lemma~\ref{lem:dhruv}, there is some edge in $S$ that contains $x_i$ and $x_a$ with weight at least $k+1$. Let this edge be $x_i x_a x_b$. Then consider the copy of $F_5$ with vertices $x_i x_a x_b u v$ and edges $x_i x_a x_b$, $x_i u v$, $x_a uv$. The sum of weights is then 
\[
w(x_i x_a x_b) + w(x_i u v) + w(x_a u v) \geq (k+1) + (c - (k+1) - \ell) + w(x_i u v) = c - \ell + w(x_i u v). 
\]
Since $G$ is $(F_5, c)$-free, we have $c- \ell + w(x_i u v) < c$. Thus we must have $w(x_i u v) < \ell$ in this case. Hence for any pair of vertices $u,v \in G \setminus S$, if we want $w(x_i u v) \geq \ell$ then we must have $w(x_a u v) < c - (k+1) - \ell$ for all other $x_a \in S$. Furthermore, we cannot have $x_a \neq x_b$ such that $w(x_a u v) \geq c - (k+1) - \ell$ and $w(x_b u v) \geq c - (k+1) - \ell $. This is because there is an edge in $S$ containing both $x_a$ and $x_b$ with weight at least $k+ 1$, so this forms an $F_5$ with weights lower bounded by
\[
2 (c - (k+1) - \ell ) + (k+1) \geq 2c - 2\ell - k - 1 \geq 2c - 3k - 1 \geq  c.
\]
This is a contradiction, so we cannot have 2 such edges. This implies for each $x_i \in S$ that
\[
\sum_{j \geq \ell} d_j(x_i) \leq \binom{n-|S|}{2} - \sum_{x_a \in S \setminus \{x_i\} } \sum_{j \geq c-(k+1)-\ell} d_j(x_a).
\]
This results in 
\begin{align}\label{ellBound}
\sum_{x_i \in S} \sum_{j \geq \ell} d_j(x_i) \leq |S|\binom{n-|S|}{2} - (|S| - 1)\sum_{x_i \in S } \sum_{j \geq c-(k+1)-\ell} d_j(x_i).
\end{align}

We now upper bound \eqref{eq:big}. We break this into 3 cases. 

\textbf{Case I.}
If $c = 3k+1$, then 
\[
\sum_{\ell = 1}^{2k-1} \left( \sum_{x_i \in S} \sum_{j \geq \ell} d_j(x_i) \right) \leq \sum_{\ell = 1}^{k} \left( \sum_{x_i \in S} \sum_{j \geq \ell} d_j(x_i) \right) + (|S| - 1) \sum_{\ell = k+1}^{2k-1} \left( \sum_{x_i \in S} \sum_{j \geq \ell} d_j(x_i) \right)
\]
 Note the extra $(|S|-1)$ factor which we added to make some cancellations nicer for this analysis. When proving our later results, we will not be adding this extra $(|S|-1)$ factor. For $1 \leq \ell \leq k-1$, we will use \eqref{ellBound} to obtain 
\begin{align*}
\sum_{\ell=1}^{k-1} \left( \sum_{x_i \in S} \sum_{j \geq \ell} d_j(x_i) \right) &\leq (k-1)|S|\binom{n-|S|}{2} - (|S| - 1) \sum_{\ell=1}^{k-1} \left( \sum_{x_i \in S } \sum_{j \geq 2k-\ell} d_j(x_i) \right)\\
&= (k-1) |S| \binom{n - |S|}{2} - (|S| - 1) \sum_{\ell=k+1}^{2k-1} \left( \sum_{x_i \in S } \sum_{j \geq \ell} d_j(x_i) \right)
\end{align*}
We also have that 
\[
\sum_{x_i \in S}  \sum_{j \geq k} d_j(x_i) \leq \sum_{x_i \in S}  \sum_{j \geq 1} d_j(x_i) \leq |S|  \binom{n-|S|}{2}.
\]
Combining these gives us 
\begin{align*}
\sum_{\ell = 1}^{2k-1} \left( \sum_{x_i \in S} \sum_{j \geq \ell} d_j(x_i) \right) &\leq (k-1) |S| \binom{n - |S|}{2} - (|S| - 1) \sum_{\ell=k+1}^{2k-1} \left( \sum_{x_i \in S } \sum_{j \geq \ell} d_j(x_i) \right)\\
 & \quad\quad\quad + |S|  \binom{n-|S|}{2} + (|S| - 1) \sum_{\ell = k+1}^{2k-1} \left( \sum_{x_i \in S} \sum_{j \geq \ell} d_j(x_i) \right)\\
 &= k |S| \binom{n - |S|}{2}. 
\end{align*}

\textbf{Case II.}
If $c = 3k + 2$, then
\[
\sum_{\ell = 1}^{2k} \left( \sum_{x_i \in S} \sum_{j \geq \ell} d_j(x_i) \right) \leq \sum_{\ell = 1}^{k} \left( \sum_{x_i \in S} \sum_{j \geq \ell} d_j(x_i) \right) + (|S| - 1) \sum_{\ell = k+1}^{2k} \left( \sum_{x_i \in S} \sum_{j \geq \ell} d_j(x_i) \right).
\]
For $1 \leq \ell \leq k$, we again will use \eqref{ellBound} to obtain 
\begin{align*}
\sum_{\ell=1}^{k} \left( \sum_{x_i \in S} \sum_{j \geq \ell} d_j(x_i) \right) &\leq k|S|\binom{n-|S|}{2} - (|S| - 1) \sum_{\ell=1}^{k} \left( \sum_{x_i \in S } \sum_{j \geq 2k+1-\ell} d_j(x_i) \right)\\
&= k |S| \binom{n - |S|}{2} - (|S| - 1) \sum_{\ell=k+1}^{2k} \left( \sum_{x_i \in S } \sum_{j \geq \ell} d_j(x_i) \right)
\end{align*}
This gives us
\begin{align*}
\sum_{\ell = 1}^{2k} \left( \sum_{x_i \in S} \sum_{j \geq \ell} d_j(x_i) \right) &\leq k |S| \binom{n - |S|}{2} - (|S| - 1) \sum_{\ell=k+1}^{2k} \left( \sum_{x_i \in S } \sum_{j \geq \ell} d_j(x_i) \right)\\
& \quad\quad\quad + (|S| - 1) \sum_{\ell = k+1}^{2k} \left( \sum_{x_i \in S} \sum_{j \geq \ell} d_j(x_i) \right) \\
&= k |S| \binom{n-|S|}{2}.
\end{align*}

\textbf{Case III.}
If $c = 3k + 3$, then
\[
\sum_{\ell = 1}^{2k+1} \left( \sum_{x_i \in S} \sum_{j \geq \ell} d_j(x_i) \right) \leq \sum_{\ell = 1}^{k+1} \left( \sum_{x_i \in S} \sum_{j \geq \ell} d_j(x_i) \right) + (|S| - 1) \sum_{\ell = k+2}^{2k+1} \left( \sum_{x_i \in S} \sum_{j \geq \ell} d_j(x_i) \right).
\]
For $1 \leq \ell \leq k$, we again will use \eqref{ellBound} to obtain 
\begin{align*}
\sum_{\ell=1}^{k} \left( \sum_{x_i \in S} \sum_{j \geq \ell} d_j(x_i) \right) &\leq k|S|\binom{n-|S|}{2} - (|S| - 1) \sum_{\ell=1}^{k} \left( \sum_{x_i \in S } \sum_{j \geq 2k+2-\ell} d_j(x_i) \right)\\
&= k |S| \binom{n - |S|}{2} - (|S| - 1) \sum_{\ell=k+2}^{2k+1} \left( \sum_{x_i \in S } \sum_{j \geq \ell} d_j(x_i) \right)
\end{align*}
For $\ell = k+1$, we can use Lemma~\ref{lem:dhruv} to obtain 
\[ 
\sum_{x_i \in S} \sum_{j \geq k+1} d_j(x_i) \leq \alpha \binom{n - |S|}{2} + O(n) \leq \alpha \binom{n - |S|}{2} + Cn.
\]
Combining these gives us 
\begin{align*}
\sum_{\ell = 1}^{2k+1} \left( \sum_{x_i \in S} \sum_{j \geq \ell} d_j(x_i) \right) &\leq k |S| \binom{n - |S|}{2} - (|S| - 1) \sum_{\ell=k+2}^{2k+1} \left( \sum_{x_i \in S } \sum_{j \geq \ell} d_j(x_i) \right) + \alpha \binom{n-  |S|}{2} \\
& \quad\quad\quad + Cn + (|S| - 1) \sum_{\ell = k+2}^{2k+1} \left( \sum_{x_i \in S} \sum_{j \geq \ell} d_j(x_i) \right)\\
&= k |S| \binom{n - |S|}{2} + \frac{\alpha}{|S|} |S| \binom{n - |S|}{2} + Cn\\
&\leq \left(k + \frac{\alpha}{|S|}\right) |S|\binom{n - |S|}{2} + Cn.
\end{align*}
Note that for each possibility in Lemma~\ref{lem:dhruv}, $\frac{\alpha}{|S|} \leq \frac{2}{9}$. Hence we get that 
\[
\sum_{\ell = 1}^{2k+1} \left( \sum_{x_i \in S} \sum_{j \geq \ell} d_j(x_i) \right) \leq \left(k + \frac{2}{9}\right) |S| \binom{n-|S|}{2} + Cn.
\]

Recall that there are other edges that intersect $S$ in 2 or 3 vertices. These contribute weight at most 
\[
c\binom{|S|}{2}(n-3) + c \binom{|S|}{3} \leq 56 c n.
\]
We also know that $G \setminus S$ should be $(F_5, c)$-free. Then for $c = 3k+1$ or $c = 3k+2$, we have $w(G) \leq k |S| \binom{n-|S|}{2} + 56 cn + w(G \setminus S)$. By the induction hypothesis, 
$w(G \setminus S) \le k \binom{n-|S|}{3} + M_c (n-|S|)^2$. Consequently, 
\begin{align*}
w(G) &\leq k|S| \binom{n-|S|}{2} + k \binom{n-|S|}{3} + M_c (n-|S|)^2 + 56cn\\
&= k(\binom{n}{3} - \binom{|S|}{3} - \binom{|S|}{2}(n-|S|)) + M_c (n-|S|)^2 + 56cn \\
&< k \binom{n}{3} + M_c (n-|S|)^2 + 56cn < k \binom{n}{3} + M_c n^2,
\end{align*} so the claim holds. 
For $c = 3k+3$, we have $w(G) \leq (k + 2/9 ) |S| \binom{n-|S|}{2} + 56cn + C n + w(G \setminus S)$. By the induction hypothesis, we have $w(G \setminus S) \le (k + 2/9) \binom{n-|S|}{3} + M_c (n-|S|)^2$. Hence,
\begin{align*}
w(G) &\leq (k + 2/9)|S| \binom{n-|S|}{2} + (k + 2/9) \binom{n-|S|}{3} +  M_c (n-|S|)^2 + 56cn + Cn\\
&= (k + 2/9)(\binom{n}{3} - \binom{|S|}{3} - \binom{|S|}{2}(n-|S|)) + M_c (n-|S|)^2 + 56cn + Cn\\
&< (k + 2/9) \binom{n}{3} + M_c (n-|S|)^2 + 56cn + Cn < (k+ 2/9) \binom{n}{3} + M_c n^2,
\end{align*}
so the claim holds. By the principle of induction, we obtain the desired Tur\'an densities.
\end{proof}

\section{Stability Results}
We extend the above results into stability results by showing near maximum $(F_5, t)$-free 3-graphs mostly have edges of weight $k$.  Note that when $c = 3k+1$ or $c = 3k+2$, the above analysis did not require  Lemma~\ref{lem:dhruv}, as we could have chosen $S$ by picking any edge of weight at least $k+1$. In these cases, we can then refine the above analysis.

\subsection{Proof of Theorem~\ref{thm:stabAll}.\ref{thm:Stab3k+13k+2}}

\begin{proof}
It suffices to prove the $3k+2$ case, as any $(F_5,3k+1)$-free graph is also $(F_5,3k+2)$-free. Suppose we are given $1 > \varepsilon > 0$. Let $\delta = \frac{\varepsilon^2}{100(4k+4)^2}$, $N$ extremely large, and $\varepsilon_1 = \frac{\varepsilon}{4k+4}$. Let $G$ be a 3-graph on $n > N$ vertices with edge weights $w$ such that every copy of $F_5$ has weight less than $3k+2$ and that $w(G) > k \binom{n}{3} - \delta n^3$. Let $t_1$ be the number of edges of weight at least $k+1$, and $t_2$ be the number of edges of weight at most $k-1$. We claim $t_1 < \varepsilon_1 n^3$. For $k = 0$ this is immediate, as every edge of positive weight must be disjoint and each edge has weight at most $1$, so $t_1 \leq n/3 < \varepsilon_1 n^3$. We then consider larger values of $k$. For the sake of contradiction, suppose there are at least $\varepsilon_1 n^3$ edges of weight at least $k+1$ in $G$.  

We will now upper bound $w(G)$. Let $t = \ceil{2 \varepsilon_1 n / 9}$. We will do this algorithmically in $t$ stages. At each stage, we count the weight of the edges incident to a chosen edge $S$ and then continue by removing this edge. 

\underline{Stage 0:}
Let $G_0 = G$ and $H_0$ be an unweighted 3-graph such that $e \in H_0$ if and only if $w(e) \geq k+1$. Then the average degree of $H_0$ is at least $3 \varepsilon_1 n^2$, so there is some vertex $x_1$ in $H_0$ with degree at least $3 \varepsilon_1 n^2$. Let $S_0 = \{x_1, x_2, x_3\}$ be an edge in $H_0$ containing $x_1$.

$S_0$ itself has weight at most $3k+1$. There are at most $3 (n-3)$ edges whose intersection with $S_0$ is size 2. These edges have weight at most $2k$, so they contribute weight at most $3 (n-3) (2k)$. We then consider edges whose intersection with $S_0$ has size 1. By refining the analysis from Theorem~\ref{thm:all3asym}, we see the sum of weights here is upper bounded by 
\begin{align*}
\sum_{x \in S_0} \sum_{j \geq 1} jd_j(x) &= \sum_{\ell = 1}^{k} \left( \sum_{x \in S_0} \sum_{j \geq \ell} d_j(x) \right) + \sum_{\ell = k+1}^{2k} \left( \sum_{x \in S_0} \sum_{j \geq \ell} d_j(x) \right) \\
&\leq k |S_0| \binom{n - |S_0|}{2} - (|S_0| - 1) \sum_{\ell=k+1}^{2k} \left( \sum_{x \in S_0 } \sum_{j \geq \ell} d_j(x) \right) \\
& \quad \quad \quad + \sum_{\ell = k+1}^{2k} \left( \sum_{x \in S_0} \sum_{j \geq \ell} d_j(x) \right)\\
&=3k\binom{n-3}{2} - \sum_{\ell = k+1}^{2k} \left( \sum_{x \in S_0} \sum_{j \geq \ell} d_j(x) \right)
\end{align*}
Recall $x_1$ has degree at least $3 \varepsilon_1 n^2$ in $H_0$. There are at most $2n$ edges that contain $x_1$ whose intersection with $S_0$ is not 1, so there are at least $3 \varepsilon_1 n^2 - 2n > \varepsilon_1 n^2$ (by choice of $N$) edges incident to $x_1$ which intersect $S_0$ in exactly 1 place. Hence
\[
\sum_{\ell = k+1}^{2k} \left( \sum_{x \in S_0} \sum_{j \geq \ell} d_j(x) \right) \geq \varepsilon_1 n^2.
\]
Thus
\[ 
w(G_0) \leq 3k+1 + 3(2k)(n-3) + 3k \binom{n-3}{2} - \varepsilon_1 n^2 + w(G_0 \setminus S_0).
\]
After removing $S_0$ from $G_0$, we lose at most $\frac{3 n^2}{2}$ edges of weight at least $k+1$. Hence $G_0 \setminus S_0$ has at least $\varepsilon_1 n^3 - \frac{3 n^2}{2}$ edges of weight at least $k+1$. 

\underline{Stage $i$ when $1 \leq i \leq t-1$:}
Let $G_{i} = G_{i-1} \setminus S_{i-1}$ and $H_{i}$ be an unweighted 3-graph such that $e \in H_{i}$ if and only if $w(e) \geq k+1$ in $G_{i}$. Note that $G_{i}$ and $H_{i}$ have $n - 3i$ vertices. From the previous stage, we lost at most $\frac{3 n^2}{2}$ edges of weight at least $k+1$. Hence $G_{i}$ has at least $\varepsilon_1 n^3 - \frac{3 n^2}{2}i$ edges of weight at least $k+1$. Then the average degree of $H_{i}$ is at least $3 \varepsilon_1 n^2 - \frac{9 n}{2}i$, so there is some vertex $x_1$ in $H_{i}$ with degree at least $3\varepsilon_1 n^2 - \frac{9 n}{2}i$. Let $S_{i} = \{x_1, x_2, x_3\}$ be an edge in $H_{i}$ containing $x_1$.

$S_{i}$ itself has weight at most $3k+1$. There are at most $3 (n-3(i+1))$ edges whose intersection with $S_{i}$ has size 2. These edges have weight at most $2k$, so they contribute at most $3 (n-3(i+1)) (2k)$. We then consider edges whose intersection with $S_i$ has size 1. By refining the analysis from Theorem~\ref{thm:all3asym}, we see the sum of weights here is upper bounded by 
\begin{align*}
\sum_{x \in S_{i}} \sum_{j \geq 1} jd_j(x) &=  \sum_{\ell = 1}^{k} \left( \sum_{x \in S_{i}} \sum_{j \geq \ell} d_j(x) \right) + \sum_{\ell = k+1}^{2k} \left( \sum_{x \in S_{i}} \sum_{j \geq \ell} d_j(x) \right) \\
&\leq k |S_{i}| \binom{n - 3i - |S_{i}|}{2} - (|S_{i}| - 1) \sum_{\ell=k+1}^{2k} \left( \sum_{x \in S_{i} } \sum_{j \geq \ell} d_j(x) \right)  \\
& \quad \quad \quad + \sum_{\ell = k+1}^{2k} \left( \sum_{x \in S_{i}} \sum_{j \geq \ell} d_j(x) \right)\\
&=3k\binom{n-3(i+1)}{2} - \sum_{\ell = k+1}^{2k} \left( \sum_{x \in S_{i}} \sum_{j \geq \ell} d_j(x) \right)
\end{align*}    
Recall $x_1$ has degree at least $3\varepsilon_1 n^2 - \frac{9 n}{2}i$ in $H_{i}$. There are at most $2n$ edges that contain $x_1$ whose intersection with $S_{i}$ is not 1, so there are at least $3\varepsilon_1 n^2 - \frac{9 n}{2}i - 2n > \varepsilon_1 n^2 - \frac{9 n}{2}i$ (by choice of $N$) edges incident to $x_1$ which intersect $S_{i}$ in exactly 1 place. Hence
\[
\sum_{\ell = k+1}^{2k} \left( \sum_{x \in S_{i}} \sum_{j \geq \ell} d_j(x) \right) \geq \varepsilon_1 n^2 - \frac{9 n}{2}i.
\]
Thus
\[ 
w(G_{i}) \leq 3k+1 + 3(2k)(n-3(i+1))  + 3k\binom{n-3(i+1)}{2} - (\varepsilon_1 n^2 - \frac{9 n}{2}i) + w(G_{i} \setminus S_{i}).
\]
After removing $S_i$ from $G_i$, we lose at most $\frac{3 n^2}{2}$ edges of weight at least $k+1$. Hence $G_i \setminus S_i$ has at least $\varepsilon_1 n^3 - \frac{3 n^2}{2}(i+1)$ edges of weight at least $k+1$. 

\underline{Stage $t = \ceil{2 \varepsilon_1 n / 9}$:}
Here, we let $G_t = G_{t-1} \setminus S_{t-1}$ as usual. Then $G_t$ has $n - 3t$ vertices. In this case, we use Theorem~\ref{thm:all3asym} to bound $w(G_t) \leq k \binom{n- 3t}{3} + O(n^2)$. Combining this gives us
\begin{align*}
w(G) &\leq w(G_t) + \sum_{i=0}^{t-1} (3k+1 + 3(2k)(n-3(i+1))  + 3k\binom{n-3(i+1)}{2} - (\varepsilon_1 n^2 - \frac{9 n}{2}i) )\\
&\leq \left(k \binom{n- 3t}{3} + 3k \sum_{i=0}^{t-1} \binom{n-3(i+1)}{2} \right) + O(n^2) \\
& \quad\quad\quad + \sum_{i=0}^{t-1} \left( 3k+1 + 3(2k)(n-3(i+1))  - \varepsilon_1 n^2 + \frac{9 n}{2}i \right)\\
&\leq \left(k \sum_{i=2}^{n-3t-1} \binom{i}{2} + k \sum_{i=n-3t}^{n-1} \binom{i}{2} \right) + O(n^2) \\
& \quad \quad\quad + \sum_{i=0}^{t-1} (3k+1 + 3(2k)(n-3(i+1))  - \varepsilon_1 n^2 + \frac{9 n}{2}i) \\
& = k \binom{n}{3} - \varepsilon_1 n^2 t + \frac{9n}{2} \binom{t}{2} +   O(n^2)\\
& \leq k \binom{n}{3} - \frac{2 \varepsilon_1^2 n^3}{9} + \frac{\varepsilon_1^2 n^3}{9} + O(n^2) = k \binom{n}{3} - \frac{\varepsilon_1^2 n^3}{9} + O(n^2) < k \binom{n}{3} - \delta n^3.
\end{align*}
This is a contradiction. Hence we have fewer than $\varepsilon_1 n^3$ edges of weight at least $k+1$, so $t_1 < \varepsilon_1 n^3$.

We argue $t_2 < (\varepsilon - \varepsilon_1) n^3$. For $k = 0$, note this is immediate, as we cannot have any edges of weight $k-1$. For all other values of $k$, suppose $t_2 \geq (\varepsilon - \varepsilon_1) n^3$ for the sake of contradiction. Then we can naively upper bound the sum of weights of the edges by doing
\begin{align*}
w(G) &\leq (3k+1) t_1 + (k-1) t_2 + k \left(\binom{n}{3} - t_1 - t_2\right) \\
&\leq k \binom{n}{3} + (2k+1)t_1 - t_2 \\
&< k \binom{n}{3} + (2k+1) \varepsilon_1 n^3 + (\varepsilon_1 - \varepsilon) n^3\\
&= k \binom{n}{3} + \left((2k+2) \frac{\varepsilon}{4k+4} - \varepsilon\right) n^3\\
&= k \binom{n}{3} - \frac{\varepsilon}{2} n^3 < k \binom{n}{3} - \delta n^3.
\end{align*}
This is a contradiction, so we must have $t_2 < (\varepsilon - \varepsilon_1) n^3$. Hence $t_1 + t_2 < \varepsilon n^3$, which means the number of edges that have weight not equal to $k$ is upper bounded by $\varepsilon n^3$. 
\end{proof}

\subsection{Restricting edge weights for \texorpdfstring{$3k+3$}{3k+3}}
The $3k+3$ case is more complicated. We will first prove Theorem~\ref{thm:stabAll}.\ref{thm:Stab3k+3i}, which we will then extend to our stronger stability theorem that builds on Theorem~\ref{thm:F5stab}. Note that unlike in the previous cases, we must use Lemma~\ref{lem:dhruv} to pick a set $S$. 

\begin{proof}
Suppose we are given $1 > \varepsilon > 0$. Let $t_1$ be  the number of edges of weight $k+1$, $t_2$ be the number of edges of weight at least $k+2$, and $t_3$ be the number of edges of weight at most $k-1$. Let $\delta = \frac{\varepsilon}{8 (2k+2)}$, $N$ extremely large. Let $G$ be a 3-graph on $n > N$ vertices with edge weights $w$ such that every copy of $F_5$ has weight less than $3k+3$ and that $w(G) > (k + 2/9) \binom{n}{3} - \delta n^3$. For the sake of contradiction, suppose $t_2 \geq \frac{\varepsilon}{4(2k+2)} n^3$. If $k = 0$, all edges have weight at most 2. Furthermore, the edges of weight 2 must be pairwise disjoint. Hence $t_2 \leq n/3 < \frac{\varepsilon}{4(2k+2)} n^3$, a contradiction. 

We now consider the case when $k \geq 1$. We will now upper bound $w(G)$ algorithmically by removing a set $S$ at each stage until we can no longer find such a set $S$.

\underline{Stage 0:} Let $G_0 = G$, $n_0 = n$, and $H_0$ be an unweighted 3-graph such that $e \in H_0$ if and only if $w(e) \geq k+1$. This graph is nonempty since we have at least $\frac{\varepsilon}{4(2k+2)} n^3$ edges in here. By Lemma~\ref{lem:dhruv}, we find some set $S_0$ of vertices in $H_0$.

The total weight of edges that intersect $S_0$ in two or three vertices is $O(n_0)$. We then consider edges whose intersection with $S_0$ has size 1. By refining the analysis of Theorem~\ref{thm:all3asym}, the sum of weights here is upper bounded by 
\begin{align*}
\sum_{x \in S_0} \sum_{j \geq 1} jd_j(x) &= \sum_{\ell = 1}^{k+1} \left( \sum_{x \in S_0} \sum_{j \geq \ell} d_j(x) \right) +  \sum_{\ell = k+2}^{2k+1} \left( \sum_{x \in S_0} \sum_{j \geq \ell} d_j(x) \right) \\
&\leq k |S_0| \binom{n_0 - |S_0|}{2} - (|S_0| - 1) \sum_{\ell=k+2}^{2k+1} \left( \sum_{x \in S_0 } \sum_{j \geq \ell} d_j(x) \right) + \alpha  \binom{n_0-|S_0|}{2} \\
& \quad \quad \quad + O(n_0) + \sum_{\ell = k+2}^{2k+1} \left( \sum_{x \in S_0} \sum_{j \geq \ell} d_j(x) \right)\\
&\leq (k+ \frac{2}{9})|S_0|\binom{n_0-|S_0|}{2} + O(n_0) - (|S_0| - 2)\sum_{\ell = k+2}^{2k+1} \left( \sum_{x \in S_0} \sum_{j \geq \ell} d_j(x) \right)
\end{align*}    
Hence
\[
w(G_0) \leq  (k+ \frac{2}{9})|S_0|\binom{n_0-|S_0|}{2} - (|S_0| - 2)\sum_{\ell = k+2}^{2k+1} \left( \sum_{x \in S_0} \sum_{j \geq \ell} d_j(x) \right)+ O(n_0) + w(G_0 \setminus S_0).
\]
The number of edges of weight at least $k+2$ that intersect $S_0$ in two or three vertices is $O(n_0)$, while the number of edges of weight at least $k+2$ that intersect in 1 vertex is $\sum_{x \in S_0} \sum_{j \geq k+2} d_j(x)$. Hence the number of edges of weight at least $k+2$ that are removed after deleting $S_0$ is 
\[
O(n_0) + \sum_{x \in S_0} \sum_{j \geq k+2} d_j(x).
\]
\underline{Stage $i$ when $i \geq 1$:} Let $G_i = G_{i-1} \setminus S_{i-1}$, $n_i = |V(G_i)|$, and $H_i$ be an unweighted 3-graph such that $e \in H_i$ if and only if $w(e) \geq k+1$. If $H_i$ is nonempty and $n_i \geq 100$, by Lemma~\ref{lem:dhruv}, we find some set $S_i$ of vertices in $H_i$.

The total weight of edges that intersect $S_i$ in two or three vertices has weight $O(n_i)$. We then consider edges whose intersection with $S_i$ has size 1. By the analysis in the proof of Theorem~\ref{thm:all3asym}, the sum of weights here is upper bounded by 
\begin{align*}
\sum_{x \in S_i} \sum_{j \geq 1} jd_j(x) &= \sum_{\ell = 1}^{k+1} \left( \sum_{x \in S_i} \sum_{j \geq \ell} d_j(x) \right) +  \sum_{\ell = k+2}^{2k+1} \left( \sum_{x \in S_i} \sum_{j \geq \ell} d_j(x) \right) \\
&\leq k |S_i| \binom{n_i - |S_i|}{2} - (|S_i| - 1) \sum_{\ell=k+2}^{2k+1} \left( \sum_{x \in S_i } \sum_{j \geq \ell} d_j(x) \right) + \alpha  \binom{n_i-|S_i|}{2} \\
& \quad \quad \quad + O(n_i) + \sum_{\ell = k+2}^{2k+1} \left( \sum_{x \in S_i} \sum_{j \geq \ell} d_j(x) \right)\\
&\leq (k+ \frac{2}{9})|S_i|\binom{n_i-|S_i|}{2} + O(n_i) - (|S_i| - 2)\sum_{\ell = k+2}^{2k+1} \left( \sum_{x \in S_i} \sum_{j \geq \ell} d_j(x) \right)
\end{align*}    
Hence
\[
w(G_i) \leq  (k+ \frac{2}{9})|S_i|\binom{n_i-|S_i|}{2} - (|S_i| - 2)\sum_{\ell = k+2}^{2k+1} \left( \sum_{x \in S_i} \sum_{j \geq \ell} d_j(x) \right)+ O(n_i) + w(G_i \setminus S_i).
\]
Note that after removing $S_i$, the number of edges of weight at least $k+2$ that intersect $S_i$ in two or three vertices is $O(n_i)$, while the number of edges of weight at least $k+2$ that intersect in 1 vertex is $\sum_{x \in S_i} \sum_{j \geq k+2} d_j(x)$. Hence the number of edges of weight at least $k+2$ that are removed after deleting $S_i$ is 
\[
O(n_i) + \sum_{x \in S_i} \sum_{j \geq k+2} d_j(x).
\]
However, if $H_i$ is empty or $n_i < 100$, then we stop the algorithm by applying Theorem~\ref{thm:all3asym} to obtain $w(G_i) \leq (k + \frac{2}{9}) \binom{n_i}{3} + O(n_i^2)$.

Suppose the algorithm ends at stage $t$. Then we see
\begin{align*}
w(G) &\leq w(G_t) + \sum_{i=0}^{t-1}\left( (k+ \frac{2}{9})|S_i|\binom{n_i-|S_i|}{2} - (|S_i| - 2)\sum_{\ell = k+2}^{2k+1} \left( \sum_{x \in S_i} \sum_{j \geq \ell} d_j(x) \right)+ O(n_i) \right)\\
&\leq \left( \left(k + \frac{2}{9}\right) \binom{n_t}{3} + \left(k + \frac{2}{9}\right) \sum_{i=0}^{t-1} |S_i| \binom{n_i - |S_i|}{2} \right) + O(n_t^2)\\
&\quad\quad\quad + \sum_{i=0}^{t-1} \left(- (|S_i| - 2)\sum_{\ell = k+2}^{2k+1} \left( \sum_{x \in S_i} \sum_{j \geq \ell} d_j(x) \right)+ O(n_i)\right)\\
&\leq \left(k + \frac{2}{9}\right) \binom{n}{3} + O (n^2) - \sum_{i=0}^{t-1} (|S_i| - 2)\sum_{\ell = k+2}^{2k+1} \left( \sum_{x \in S_i} \sum_{j \geq \ell} d_j(x) \right).
\end{align*}
Note that by stage $t$, we have removed all but at most $\binom{100}{3}$ edges of weight at least $k+2$. By summing over the number of these edges we removed at each step, we see that
\[
\frac{\varepsilon}{4(2k+2)} n^3 \leq \sum_{i=0}^{t-1} \left( O(n_i) + \sum_{x \in S_i} \sum_{j \geq k+2} d_j(x) \right) + \binom{100}{3} \leq O(n^2) + \sum_{i=0}^{t-1} \sum_{x \in S_i} \sum_{j \geq k+2} d_j(x).
\]
Thus 
\[
\frac{\varepsilon}{4(2k+2)} n^3 - O(n^2) \leq \sum_{i=0}^{t-1} \sum_{x \in S_i} \sum_{j \geq k+2} d_j(x) \leq \sum_{i=0}^{t-1} (|S_i| - 2)\sum_{\ell = k+2}^{2k+1} \left( \sum_{x \in S_i} \sum_{j \geq \ell} d_j(x) \right).
\]
Applying this to our previous upper bound gives us 
\begin{align*}
w(G) &\leq \left(k + \frac{2}{9}\right) \binom{n}{3} + O (n^2) - \sum_{i=0}^{t-1} (|S_i| - 2)\sum_{\ell = k+2}^{2k+1} \left( \sum_{x \in S_i} \sum_{j \geq \ell} d_j(x) \right) \\
&\leq \left(k + \frac{2}{9}\right) \binom{n}{3} + O (n^2) - \frac{\varepsilon}{4(2k+2)} n^3 < \left(k + \frac{2}{9}\right) \binom{n}{3}  - \delta n^3.
\end{align*}
This is a contradiction. Hence $t_2 \leq \frac{\varepsilon}{4(2k+2)} n^3$.

We now consider $t_3$. We will show $t_3 < \frac{\varepsilon}{2} n^3$. For $k = 0$, $t_3 = 0$ since we permit only nonnegative weights, so we are done. For $k \geq 1$, we know 
\begin{align*}
\left(k + \frac{2}{9} \right) \binom{n}{3} - \delta n^3 &\leq w(G)\\
&\leq k \left(\binom{n}{3} - t_1 - t_2 - t_3 \right) + (k+1) t_1 + (3k+2) t_2 + (k-1) t_3 \\
&\leq k \binom{n}{3} + t_1 + (2k + 2) t_2 - t_3.
\end{align*}
Consider the hypergraph formed by the edges of weight $k+1$. This hypergraph is $F_5$-free. Hence for large enough $n$, we know $t_1 \leq  \frac{2}{9} \binom{n}{3} + \delta n^3$. Thus
\begin{align*}
\left(k + \frac{2}{9} \right) \binom{n}{3} - \delta n^3 &\leq k \binom{n}{3} + \frac{2}{9} \binom{n}{3} + \delta n^3 + (2k + 2) t_2 - t_3\\
\Rightarrow t_3 &\leq (2k+2) t_2 + 2 \delta n^3 \leq \left(\frac{\varepsilon}{4} + \frac{\varepsilon}{4 (2k+2)} \right) n^3 < \frac{\varepsilon}{2} n^3. 
\end{align*}
Thus
\begin{align*}
|\{e \in \binom{V(G)}{3} : w(e) \geq k+2 \text{ or } w(e) \leq k-1\}| = t_2 + t_3 \leq \frac{\varepsilon}{4(2k+2)} n^3 + \frac{\varepsilon}{2} n^3 < \varepsilon n^3. 
\end{align*}
\end{proof}
\subsection{Proof of Theorem~\ref{thm:stabAll}.\ref{thm:Stab3k+3}}
Combining Theorem~\ref{thm:stabAll}.\ref{thm:Stab3k+3i} with Theorem~\ref{thm:F5stab}, we obtain the stability theorem for the $3k+3$ case.

\begin{proof}
Suppose we have $0 < \varepsilon < 1$. By Theorem~\ref{thm:F5stab}, we get some $\delta_1 < \varepsilon$ and $N_1$. We have already proven Theorem~\ref{thm:stabAll}.\ref{thm:Stab3k+3i}, so we apply it to $\frac{\delta_1}{60(2k+1)}$ to get some $\delta = \frac{\delta_1}{60(2k+1)} \frac{1}{8(2k+2)}$ and $N \gg N_1$ large. Now, suppose $G$ is $(F_5, 3k+3)$-free with $w(G) > (k + 2/9) \binom{n}{3} - \delta n^3$. Let $t_1$ be the number of edges of weight $k+1$, and $t_2$ the number of edges of weight at least $k+2$. By Theorem~\ref{thm:stabAll}.\ref{thm:Stab3k+3i}, $t_2 \leq \frac{\delta_1}{60(2k+1)} n^3$. Then
\begin{align*}
\left(k  + \frac{2}{9} \right)\binom{n}{3} - \delta n^3 < w(G) &\leq k \left( \binom{n}{3} - t_1 - t_2 \right) + (k+1) t_1 + (3k+2) t_2\\
&\leq k \binom{n}{3} + t_1 + (2k + 2)t_2.
\end{align*}
Hence 
\[
t_1 + t_2 \geq \frac{2}{9} \binom{n}{3} - (2k+1) t_2 - \delta n^3 \geq \frac{2}{9} \binom{n}{3} - \frac{\delta_1}{60} n^3  - \delta n^3 > \frac{2}{9} \binom{n}{3} - \frac{2\delta_1}{60} n^3 > (1 - \delta_1) s(n).
\]
Let $H$ be the 3-graph whose edges are the triples of weight at least $k+1$. Then $H$ is an $F_5$-free 3-graph on $t_1 + t_2$ edges. By Theorem~\ref{thm:F5stab}, there is a partition of the vertex set of $G$ as $V(G) = U_1 \cup U_2 \cup U_3$, so that all but at most $\varepsilon n^3$ edges of $H$ have one point in each $U_i$.  Thus all but $\varepsilon n^3$ edges of $G$ of weight $k+1$ or higher have one point in each $U_i$, so Theorem~\ref{thm:stabAll}.\ref{thm:Stab3k+3ii} holds.   
\end{proof}

\section{Exact Results}

Recall the constructions we presented in Theorem~\ref{thm:all3asym}. We now prove these constructions are in fact exactly optimal for large $n$.

\subsection{Proof of Theorem~\ref{thm:exactAll}.\ref{thm:exact3k+1}}

\begin{proof}
Let $n$ be a large integer and let $G$ be a 3-graph on $n$ vertices with edge weights $w$ that is $(F_5, 3k+1)$-free. If $k = 0$, every edge must have weight $0$, or else the edge will form a copy of $F_5$ of positive weight, so we immediately get the desired result. We now turn to the case when $k > 0$. For the sake of contradiction, suppose $w(G) > k \binom{n}{3}$. Since $n$ is very large, we can apply Theorem~\ref{thm:stabAll}.\ref{thm:Stab3k+13k+2} to $\varepsilon = \frac{1}{200k}$. Let $t_i$ be the number of edges of weight $i$, let $M$ be the set of all edges of weight at most $k-1$, and $B$ be the set of all edges of weight at least $k+1$. By stability, $|M| + |B| < \varepsilon n^3$. On the other hand, we see 
\begin{align*}
w(G) &= \sum_{i=0}^{3k} i t_i  = k \binom{n}{3} + \sum_{i=0}^{3k} (i - k) t_i = k \binom{n}{3} + \sum_{i=0}^{k-1} (i-k) t_i + \sum_{i=k+1}^{3k} (i-k) t_i \\
&\leq k \binom{n}{3} - \sum_{i=0}^{k-1} t_i + 2k \sum_{i=k+1}^{3k} t_i\\
&= k \binom{n}{3} - |M| + 2k |B|.
\end{align*}
Thus
\[
k \binom{n}{3} < k \binom{n}{3} - |M| + 2k |B| \Longrightarrow |M| < 2k |B|.
\]
Therefore $B$ is nonempty for the above inequality to hold. Given a vertex $v$, let $N_B(v)$ denote the set of edges of $B$ that are incident to $v$ and $d_B(v) = |N_B(v)|$. Similarly, let $N_M(v)$ denote the set of edges of $M$ that are incident to $v$ and $d_M(v) = |N_M(v)|$. Suppose $e = xyz \in B$. Fix some pair of distinct vertices $a,b$ such that $a,b \not\in e$. We claim at least 2 of $abx, aby, abz$ are in $M$. For the sake of contradiction, suppose $abx$ and $aby$ are both not in $M$. Then $abx$ and $aby$ both have weight at least $k$, while $xyz$ has weight at least $k+1$. This means $abxyz$ is an $F_5$ with weight $3k+1$, a contradiction. Hence at least 2 of these edges are in $M$. Thus there are at least $2\binom{n-3}{2}$ edges incident to $e$ that are in $M$ whose intersection with $e$ is exactly 1. By averaging, there is a vertex in $e$ incident to at least $\frac{2}{3} \binom{n-3}{2}$ edges of $M$. Hence for any $e \in B$, there is some vertex $v \in e$ such that $d_M(v) \geq \frac{2}{3} \binom{n-3}{2}$.

Let $A = \{v \in V(G) : d_M(v) \geq  \frac{2}{3} \binom{n-3}{2}\}$. Since every edge $e \in B$ has a vertex $v \in A$, we see that 
\[
\sum_{v \in A} d_B(v) \geq |B| > \frac{|M|}{2k} = \frac{1}{6k} \sum_{v \in V} d_M(v) \geq \frac{1}{6k} \sum_{v \in A} d_M(v) \geq \frac{|A|}{9k} \binom{n-3}{2}.
\]
By averaging, there is some vertex $u \in A$ such that $d_B(u) \geq \frac{1}{9k} \binom{n-3}{2}$. Let $e = xyu \in B$ be any edge incident to $u$. Then by our previous argument, for any distinct vertices $a,b \in V(G) \setminus e$, at least 2 of $abx$, $aby$, or $abu$ belong to $M$. This means at least one of $abx$ or $aby \in M$. Thus each of these edges $e \in B$ is incident with at least $\binom{n-3}{2}$ edges $e' \in M$ such that $|e \cap e'| = 1$ and $u \not\in e \cap e'$. Note that each $e'$ is incident to at most $3n$ edges $e \in B$ that are incident to $u$. Thus
\[
|M| \geq \frac{\sum_{e \in N_B(u)} \binom{n-3}{2}}{3n} \geq \frac{\binom{n-3}{2}^2}{27nk} = \frac{(n-3)^2 (n-4)^2}{108 nk} > \varepsilon n^3.
\]
This contradicts $|M| + |B| < \varepsilon n^3$. Hence for large $n$, $w(G) \leq k \binom{n}{3}$, so $\ex_{\mathbb{N}}(n, F_5, 3k+1) = k \binom{n}{3}$.
\end{proof}

\subsection{Proof of Theorem~\ref{thm:exactAll}.\ref{thm:exact3k+2}}
For the $3k+2$ case, we show the exact result by precisely analyzing the recurrence. 
\begin{lemma}~\label{lem:3k+1rec}
If $n \geq 8$, \[\ex_{\mathbb{N}}(n, F_5, 3k+2) \leq \max \begin{cases}
k \binom{n}{3}, \\
k\binom{n-1}{2} + k\binom{n-2}{2} + k\binom{n-3}{2} + 1 + \ex_{\mathbb{N}}(n-3, F_5, 3k+2)
\end{cases}\]
\end{lemma}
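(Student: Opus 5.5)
Let $G$ be an extremal $(F_5,3k+2)$-free weighted 3-graph on $n\ge 8$ vertices. If every edge has weight at most $k$, then $w(G)\le k\binom{n}{3}$ and the first line of the maximum applies. So assume some edge $S=xyz$ has $w(S)\ge k+1$. We split $w(G)$ into four parts: $w(S)$ itself, the edges meeting $S$ in exactly two vertices, the edges meeting $S$ in exactly one vertex, and $w(G\setminus S)$. The last part is at most $\ex_{\mathbb{N}}(n-3,F_5,3k+2)$, since $G\setminus S$ is again $(F_5,3k+2)$-free. Note that
\[
\binom{n-1}{2}+\binom{n-2}{2}+\binom{n-3}{2} = 1+3(n-3)+3\binom{n-3}{2},
\]
so the target says that the first three parts together weigh at most $k\bigl(1+3(n-3)+3\binom{n-3}{2}\bigr)+1$. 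In words, every triple touching $S$ is charged $k$, plus a single extra unit overall.

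\textbf{Edges meeting $S$ in one vertex.} I would reuse Case II of the proof of Theorem~\ref{thm:all3asym}, with $|S|=3$ and this edge $S$ in place of the set from Lemma~\ref{lem:dhruv}. As noted at the start of the stability section, Case II only needs $S$ to be an edge of weight at least $k+1$. Without the extra $(|S|-1)$ factor, that computation gives
\[
\sum_{x\in S}\sum_j j\,d_j(x)\le 3k\binom{n-3}{2}-\sum_{\ell=k+1}^{2k}\sum_{x\in S}\sum_{j\ge\ell}d_j(x).
\]
Call the subtracted quantity $D$. A more direct pairwise argument also works: for $u,v\notin S$, any two of $xuv,yuv,zuv$ form an $F_5$ together with $S$, so each pair sums to at most $2k$, and hence $w(xuv)+w(yuv)+w(zuv)\le 3k$.

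\textbf{Edges meeting $S$ in two vertices, and the main obstacle.} A triple $xyu$ shares the pair $xy$ with $S$. For $F_5$ copies we need configurations like $\{xyu, xyz, \ldots\}$: the triples $xyu$ and $xyz$ share the pair $xy$, and the third edge must be disjoint from $xy$ and contain $u$ and $z$, giving an $F_5$ on $\{x,y,u,z,v\}$ with third edge $uzv$. This yields $w(xyu)+w(xyz)+w(uzv)\le 3k+1$ for every $v$. Using $w(S)\ge k+1$, we get $w(xyu)+w(uzv)\le 2k$. Also $w(xyu)+w(xyu')+w(uu'z)\le 3k+1$, and several similar triples. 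The main obstacle is showing that the total excess over $k$ — from $w(S)-k$ and from all the two-vertex triples — is at most $1$, after it is possibly absorbed by deficits $D$ or deficits elsewhere. Concretely, if $w(xyu)=k+a$ with $a\ge1$, then every $uzv$ has weight at most $k-a-1$. The $n-5$ choices of $v$ then give deficits, and those triples $zuv$ are single-vertex triples for $S$, so they lower the one-vertex sum below $3k\binom{n-3}{2}$. More precisely, $w(zuv)+w(xuv)\le 2k$ and $w(zuv)+w(yuv)\le 2k$ together imply $w(xuv)+w(yuv)+w(zuv)\le 4k-w(zuv)$, which is not immediately helpful. So I would instead bound each pair-triple sum by $3k$ again, using the deficit at $zuv$ together with the pair bounds $w(xuv),w(yuv)\le 2k-w(zuv)$ only when needed.

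\textbf{Closing the accounting.} I expect this bookkeeping to work out as follows. Each unit of excess on a two-vertex triple, or on $S$ beyond $k+1$, forces a deficit of order $n$ among one-vertex triples. For $n\ge 8$ this is at least as large as the excess itself, leaving only the $+1$ from $w(S)=k+1$ uncompensated. Should the charging fail, the fallback is to choose $S$ to maximize $w(S)$ and then compare with $k\binom{n}{3}$ directly.
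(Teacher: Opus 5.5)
\textbf{There is a genuine gap: the accounting that proves the lemma is never done.} Your setup matches the paper's. You use the same split into $w(S)$, two-vertex triples, one-vertex triples and $w(G\setminus S)\le \ex_{\mathbb{N}}(n-3,F_5,3k+2)$, the same identity for the target, and the same $F_5$ copies $\{xyz,xuv,yuv\}$ and $\{xyz,xyu,zuv\}$. But the decisive step---that the net excess over $k$ on triples meeting $S$ is at most $1$---is not carried out. You only ``expect this bookkeeping to work out'', and the fallback (maximize $w(S)$ and compare with $k\binom{n}{3}$) is not an argument.

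The obstacle you name, a deficit at $zuv$ being cancelled by excess at $xuv$ or $yuv$, is resolved by combining the other pair of inequalities. With $w(S)=k+1+j$, the copy $\{xyz,xuv,yuv\}$ gives $w(xuv)+w(yuv)\le 2k-j$. Hence $w(xuv)+w(yuv)+w(zuv)\le 2k-j+w(zuv)$, and the deficit at $zuv$ survives in the pair sum. The paper does the same thing globally. Your Case II bound is $w_1\le 3k\binom{n-3}{2}-E$, where your subtracted quantity is the total excess $E$ of one-vertex triples over $k$. Combined with the exact identity $w_1=3k\binom{n-3}{2}-D+E$, with $D$ their total deficit below $k$, it gives $2E\le D$ and hence $w_1\le 3k\binom{n-3}{2}-D/2$.

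\textbf{The quantitative charging is also missing, and that is where $n\ge 8$ enters.} Measure two-vertex excess relative to $k-j$ rather than $k$: put $i_{xyu}=\max\{w(xyu)-(k-j),0\}$. The copy $\{xyz,xyu,zuv\}$ gives $w(xyu)+w(zuv)\le 2k-j$. So $w(xyu)=k+a$ forces $w(zuv)\le k-a-j$, not $k-a-1$ as you wrote. Hence $i_{xyu}\le\max\{k-w(zuv),0\}$ for each of the $n-4$ (not $n-5$) choices of $v$. Each $zuv$ serves both $xyu$ and $xyv$, so summing gives $D\ge\frac{n-4}{2}\sum_e i_e$. Therefore $w_1\le 3k\binom{n-3}{2}-\frac{n-4}{4}\sum_e i_e$, while $w_2\le 3(k-j)(n-3)+\sum_e i_e$ and $w(S)=k+1+j$. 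The total then exceeds the target by at most $j(10-3n)+\bigl(1-\tfrac{n-4}{4}\bigr)\sum_e i_e$, which is nonpositive for $n\ge 8$.

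The $k-j$ baseline is what absorbs the extra $j$ on $S$; your proposal only asserts this. The two factors of $\tfrac12$ are why ``a deficit of order $n$'' has to be made exact before you can claim it suffices at $n=8$.
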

\begin{proof}
Let $G$ be a $3$-graph on $n$ vertices with weights $w$ that is $(F_5, 3k+2)$-free such that $w(G) = \ex_{\mathbb{N}}(n, F_5, 3k+2)$. If $G$ has no edges of weight $k+1$ or greater, then $w(G) \leq k \binom{n}{3}$. Otherwise, let $S = \{x,y,z\}$ be any edge of weight at least $k+1$. 

We will upper bound $w(G)$ by looking at the size of intersections with $S$. Let $w_0$, $w_1$, $w_2$, and $w_3$ represent the total weight of edges whose intersection with $S$ has size $0$, $1$, $2$, and $3$ respectively. Let $w(S) = k + 1 + j$ for some $0 \leq j \leq 2k$, so $w_3 = k + 1 +j$. We also know $w_0 \leq \ex_{\mathbb{N}}(n-3, F_5, 3k+2)$. From our analysis in Theorem~\ref{thm:stabAll}.\ref{thm:Stab3k+13k+2}, we know
\[
w_1 \leq 3k\binom{n-3}{2} - \sum_{\ell = k+1}^{2k} \left( \sum_{x_i \in S} \sum_{r \geq \ell} d_r(x_i) \right) = 3k\binom{n-3}{2} - \sum_{\ell = k+1}^{2k} \left( \sum_{x_i \in S}  (\ell-k) d_\ell(x_i) \right).
\]
Recall that 
\begin{align*}
w_1 &= \sum_{\ell=0}^{2k} \left( \sum_{x_i \in S} \ell d_\ell(x_i)\right)\\ &= 3k \binom{n-3}{2} - \sum_{\ell = 0}^{k-1} \left( \sum_{x_i \in S}  (k-\ell) d_\ell(x_i) \right) + \sum_{\ell = k+1}^{2k} \left( \sum_{x_i \in S}  (\ell - k) d_\ell(x_i) \right).
\end{align*}
Using our previous upper bound, this implies that 
\[
2\sum_{\ell = k+1}^{2k} \left( \sum_{x_i \in S}  (\ell-k) d_\ell(x_i) \right) \leq \sum_{\ell = 0}^{k-1} \left( \sum_{x_i \in S}  (k-\ell) d_\ell(x_i) \right).
\]
Applying this, we get a new upper bound
\[
w_1 \leq 3k \binom{n-3}{2} - \frac{1}{2}\sum_{\ell = 0}^{k-1} \left( \sum_{x_i \in S}  (k-\ell) d_\ell(x_i) \right).
\]
For any distinct $a,b \in V(G) \setminus S$, we have the inequalities
\begin{align*}
w(xyz) + w(xya) + w(zab) \leq 3k + 1 \Rightarrow w(xya) + w(zab) \leq 2k - j\\
w(xyz) + w(xza) + w(yab) \leq 3k + 1 \Rightarrow w(xza) + w(yab) \leq 2k - j\\
w(xyz) + w(yza) + w(xab) \leq 3k + 1 \Rightarrow w(yza) + w(xab) \leq 2k - j
\end{align*}
Consider any edge $e$ whose intersection with $S$ has size $2$. Without loss of generality, let $e = xya$. Define $i_{xya} = \max\{w(xya) - (k-j), 0\}$. By the above inequalities, we have $i_{xya} \leq k$. Then for all $b \in V(G) \setminus (S \cup \{a\})$, by the above inequalities, we have $w(xya) - (k - j) \leq k - w(zab)$ so $i_{xya} \leq \max\{k - w(zab), 0\}$. For a given $a$, there are $n-4$ choices for $b$. However, symmetrically, we get $w(xyb) - (k - j) \leq k - w(zab)$ so $i_{xyb} \leq \max\{k - w(zab), 0\}$. Hence $\frac{i_{xya} + i_{xyb}}{2} \leq \max\{i_{xya}, i_{xyb}\} \leq \max\{k - w(zab), 0\}$. Let $H$ be the set of all edges with intersection with $S$ of size 2 and weight at least $k-j + 1$. Summing yields
\[
\sum_{e \in H} (n-4) \frac{i_e}{2} \leq \sum_{\ell = 0}^{k-1} \left( \sum_{x_i \in S}  (k-\ell) d_\ell(x_i) \right).
\]
Thus 
\[
w_1 \leq 3k \binom{n-3}{2} - \sum_{e \in H} \frac{(n-4)}{4} i_e
\] and 
\[
w_2 \leq 3(k-j) (n-3) + \sum_{e \in H} i_e.
\]
Note that for $n \geq 8$, $(n-4)/4 \geq 1$. Altogether, we get 
\begin{align*}
\ex_{\mathbb{N}}(n, F_5, 3k+2) &\leq w_3 + w_2 + w_1 + w_0 \\
    &\leq (k+1+j) + \left(3(k-j) (n-3) + \sum_{e \in H} i_e\right)\\
    & \quad\quad\quad\quad+ \left(3k \binom{n-3}{2} - \sum_{e \in H} \frac{(n-4)}{4} i_e \right) + \ex_{\mathbb{N}}(n-3, F_5, 3k+2)\\
    &\leq 3k \binom{n-3}{2} + 3(k-j) (n-3) + k + 1 + j + \ex_{\mathbb{N}}(n-3, F_5, 3k+2)\\
    &= 3k \binom{n-3}{2} + 3kn -8k + 1 + 10j - 3nj + \ex_{\mathbb{N}}(n-3, F_5, 3k+2)\\
    &= k \binom{n-1}{2} + k\binom{n-2}{2} + k\binom{n-3}{2} + 1\\
    &\quad\quad\quad\quad+ j(10 - 3n) + \ex_{\mathbb{N}}(n-3, F_5, 3k+2)\\
    &\leq k \binom{n-1}{2} + k\binom{n-2}{2} + k\binom{n-3}{2} + 1 + \ex_{\mathbb{N}}(n-3, F_5, 3k+2)
\end{align*}
\end{proof}
Note this inequality is achievable when $S$ has weight $k+1$, and all edges contributing to $w_1$ and $w_2$ have weight $k$. This heavily suggests our lower bound is exactly optimal. To finish, we use a progressive induction. We say a $(F_5, 3k+2)$-free 3-graph is $(k+1)$-sparse if all edges of weight at least $k+1$ are disjoint from each other. 
\begin{lemma}
Let $T(n)$ be the maximum weight of a $(F_5, 3k+2)$-free $(k+1)$-sparse 3-graph on $n \geq 5$ vertices. We claim $T(n) = k \binom{n}{3} + \floor{\frac{n}{3}}$. 
\end{lemma}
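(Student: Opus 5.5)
We prove the two inequalities separately. For the lower bound, take the construction from Theorem~\ref{thm:all3asym}: give a maximum matching of $\floor{n/3}$ disjoint triples weight $k+1$ and every other triple weight $k$. The heavy edges are pairwise disjoint, so the weighting is $(k+1)$-sparse. Any copy of $F_5$ has pairwise intersecting edges, so it contains at most one heavy edge. Hence its weight is at most $3k+1$, and the weighting is $(F_5,3k+2)$-free. This gives $T(n)\geq k\binom{n}{3}+\floor{n/3}$.

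For the upper bound, we induct on $n$, in steps of three. If $G$ has no edge of weight at least $k+1$, then $w(G)\leq k\binom{n}{3}\leq k\binom{n}{3}+\floor{n/3}$ and we are done. Otherwise, pick a heavy edge $S=xyz$ and rerun the argument of Lemma~\ref{lem:3k+1rec} with $S$. That proof only uses that $S$ has weight $k+1+j$ and that $G$ is $(F_5,3k+2)$-free. Its bound on $w_0$ can be replaced by $T(n-3)$, because $G\setminus S$ is again $(F_5,3k+2)$-free and $(k+1)$-sparse. The computation there then gives
\[
w(G)\leq k\binom{n-1}{2}+k\binom{n-2}{2}+k\binom{n-3}{2}+1+T(n-3).
\]
The first three terms equal $k\bigl(\binom{n}{3}-\binom{n-3}{3}\bigr)$. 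Plugging in the induction hypothesis $T(n-3)=k\binom{n-3}{3}+\floor{(n-3)/3}$ yields $k\binom{n}{3}+\floor{(n-3)/3}+1=k\binom{n}{3}+\floor{n/3}$, which is the claim.

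There is one technical point. The sparsity assumption is not really needed in the step, but it makes the recursion close cleanly. It also lets us say directly that every edge meeting $S$ in one or two vertices has weight at most $k$. That gives a simpler alternative to the $i_e$ bookkeeping: $w_2\leq 3k(n-3)$, and the bound on $w_1$ from the stability analysis collapses to $3k\binom{n-3}{2}$.

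The main obstacle is the base cases and the requirement $n\geq 8$ in Lemma~\ref{lem:3k+1rec}. The induction needs $T(n)=k\binom{n}{3}+\floor{n/3}$ for $n\in\{5,6,7\}$, and these are exactly where that lemma does not apply, so they must be checked directly. For $n=5,6,7$ a $(k+1)$-sparse weighting has at most two disjoint heavy edges, and exactly two only when $n\geq 6$. We plan a short direct argument using the three inequalities $w(xya)+w(zab)\leq 2k-j$ from the proof of Lemma~\ref{lem:3k+1rec}. Each unit of excess on the heavy edge $S$ must be paid for by weight deficits on the pairs $(xya, zab)$. For $n\geq 5$ there is at least one such pair for every edge meeting $S$ in two vertices, so the total excess of $S$ over $k$ is at most $1$ after compensation, and the same holds for a second disjoint heavy edge. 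If the small-case computation turns out messier than this, we will fall back on verifying the needed inequality for $(n-4)/4$ at $n=5,6,7$ by hand, since the lost term there is only a bounded constant.
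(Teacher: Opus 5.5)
Your main argument is correct and is essentially the paper's proof: the matching construction for the lower bound, the recursion from the proof of Lemma~\ref{lem:3k+1rec} with $w_0\le T(n-3)$ for $n\ge 8$, and a direct check at $n=5,6,7$ that uses one copy of $F_5$ through each heavy edge. The paper uses the copies $\{123,145,245\}$ and $\{456,124,125\}$; these are edge-disjoint, which settles the disjointness of the two compensations that you leave implicit.

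One caution concerns your ``simpler alternative''. The bounds $w_2\le 3k(n-3)$ and $w_1\le 3k\binom{n-3}{2}$ alone ignore the excess $j$ in $w_3=k+1+j$, so they give $+1+j$ instead of $+1$. You still need one $F_5$ through $S$, such as $w(S)+w(xab)+w(yab)\le 3k+1$, to absorb $j$. Separately, your fallback of losing a bounded constant at $n\le 7$ would not yield the exact value.
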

\begin{proof}
We prove this claim by induction on $n$. For the base cases, we have $n=5,6,7$. Let $G$ be a $(F_5, 3k+2)$-free $(k+1)$-sparse 3-graph on $n$ vertices. First, let $n = 5$. If $G$ has no edges of weight $k+1$ or higher, then $w(G) \leq k \binom{5}{3} = 10k$. Otherwise, $G$ has exactly 1 edge of weight $k+1$ or higher. Without loss of generality, let $V(G) = \{1,2,3,4,5\}$ and suppose edge $123$ has weight at least $k+1$. Note that the triples $123$, $145$, and $245$ form a copy of $F_5$. Hence
\[
w(123) + w(145) + w(245) \leq 3k + 1
\]
Since $G$ is $(k+1)$-sparse, the seven edges other than $123$, $145$, $245$ must have weight at most $k$. Thus 
\[
w(G) \leq w(123) + w(145) + w(245) + 7k \leq 10k + 1 = k \binom{5}{3} + \floor{\frac{5}{3}}.
\]
This is achievable by making $w(123) = k+1$ and all other edges having weight $k$. 

Next, let $n = 6$. If $G$ has no edges of weight $k+1$ or higher, then $w(G) \leq k \binom{6}{3} = 20k$. If $G$ has exactly 1 edge of weight $k+1$ or higher, by the same argument as before we will have $w(G) \leq (3k+1) + 17k = 20 k + 1$. Otherwise, $G$ has 2 edges of weight $k+1$ or higher. Since $G$ is $(k+1)$-sparse, these edges are disjoint. Without loss of generality, let $V(G) = \{1,2,3,4,5,6\}$ and suppose edges $123$ and $456$ have weight at least $k+1$. Note that the triples $123$, $145$, and $245$ form a copy of $F_5$, so
\[
w(123) + w(145) + w(245) \leq 3k + 1.
\]
Similarly, the triples $456$, $124$, and $125$ form a copy of $F_5$ that is edge-disjoint from the previous copy of $F_5$. Thus 
\[
w(456) + w(124) + w(125) \leq 3k + 1.
\] Since $G$ is $(k+1)$-sparse, all other edges have weight at most $k$. Hence
\[
w(G) \leq (w(123) + w(145) + w(245)) + (w(456) + w(124) + w(125)) + 14k \leq k \binom{6}{3} + \floor{\frac{6}{3}}.
\]
This is achievable by making $w(123) = w(456) = k+1$ and all other edges having weight $k$. 

For the last base case, let $n = 7$. If $G$ has no edges of weight $k+1$ or higher, then $w(G) \leq k \binom{7}{3} = 35k$. If $G$ has exactly 1 edge of weight $k+1$ or higher, by the same argument as before we will have $w(G) \leq (3k+1) + 32k = 35 k + 1$. Otherwise, $G$ has 2 edges of weight $k+1$ or higher. Since $G$ is $(k+1)$-sparse, these edges are disjoint. Without loss of generality, let $V(G) = \{1,2,3,4,5,6,7\}$ and suppose edges $123$ and $456$ have weight at least $k+1$. By the same arguments as before, we know that 
\[
w(123) + w(145) + w(245) \leq 3k + 1.
\]
and 
\[
w(456) + w(124) + w(125) \leq 3k + 1.
\] All 29 other edges have weight at most $k$. Hence
\[
w(G) \leq (w(123) + w(145) + w(245)) + (w(456) + w(124) + w(125)) + 29k \leq k \binom{7}{3} + \floor{\frac{7}{3}}.
\]
This is achievable by making $w(123) = w(456) = k+1$ and all other edges having weight $k$. Hence $T(n) = k \binom{n}{3} + \floor{\frac{n}{3}}$ for $n=5$, $6$, and $7$.

For the induction step, suppose we have $n \geq 8$. Applying the proof of  Lemma~\ref{lem:3k+1rec} with $w_0 \leq T(n-3)$, we get $T(n) \leq k\binom{n-1}{2} + k\binom{n-2}{2} + k\binom{n-3}{2} + 1 + T(n-3)$. By the induction hypothesis, we know $T(n-3) = k \binom{n-3}{3} + \floor{\frac{n-3}{3}}$. Then 
\begin{align*}
T(n) &\leq \left( k\binom{n-1}{2} + k\binom{n-2}{2} + k\binom{n-3}{2} + k\binom{n-3}{3} \right) + \left( 1 + \floor{\frac{n-3}{3}} \right)\\
&= k \binom{n}{3} + \floor{\frac{n}{3}}.
\end{align*}
This is achievable by our previous lower bound construction, which takes a maximum matching on $n$ vertices and gives these edges weight $k+1$ while making all other edges have weight $k$. Hence $T(n) = k \binom{n}{3} + \floor{\frac{n}{3}}$ for all $n \geq 5$ by induction.
\end{proof}

By definition, $\ex_{\mathbb{N}}(n, F_5, 3k+2) \geq T(n)$. We define the non-negative function $h(n)$ such that $\ex_{\mathbb{N}}(n, F_5, 3k+2) = T(n) + h(n)$. 
\begin{lemma}
    For $n \geq 12$, $h(n) \leq h(n-3) - 1$ or $h(n) = 0$.
\end{lemma}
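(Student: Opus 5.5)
We argue by splitting on the structure of an extremal configuration. Let $G$ be $(F_5,3k+2)$-free on $n \ge 12$ vertices with $w(G) = \ex_{\mathbb{N}}(n,F_5,3k+2) = T(n)+h(n)$.

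\textbf{Easy cases.} If $G$ is $(k+1)$-sparse, then $w(G) \le T(n)$, so $h(n)=0$ and we are done. If $G$ has no edge of weight at least $k+1$, then $w(G)\le k\binom n3 \le T(n)$, and again $h(n)=0$.

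\textbf{Main case.} So assume there are two edges of weight at least $k+1$ that intersect. Call them $S=xyz$ and $e'$. We rerun the proof of Lemma~\ref{lem:3k+1rec} with this particular $S$. That proof gives
\[
w(G)\le k\binom{n-1}{2}+k\binom{n-2}{2}+k\binom{n-3}{2}+1+j(10-3n)+\sum_{e\in H}\Bigl(1-\tfrac{n-4}{4}\Bigr)i_e+w(G\setminus S),
\]
where $w(S)=k+1+j$. The plan is to show that the presence of $e'$ forces a strict loss of at least $1$ in one of these slack terms.

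\textbf{Case $|e'\cap S|=2$.} Say $e'=xya$ with $w(xya)\ge k+1$. Then $i_{xya}\ge w(xya)-(k-j)\ge 1+j\ge 1$. Since $n\ge 12$ gives $\tfrac{n-4}{4}-1\ge 1$, the term $\sum_{e\in H}(1-\tfrac{n-4}{4})i_e$ contributes at most $-1$.

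\textbf{Case $|e'\cap S|=1$.} Say $e'=xab$ with $w(xab)\ge k+1$. In the proof of Theorem~\ref{thm:stabAll}.\ref{thm:Stab3k+13k+2} the bound on $w_1$ lost $\sum_{\ell=k+1}^{2k}\sum_{x_i}\sum_{r\ge\ell}d_r(x_i)$, and this quantity is at least $1$ when $k\ge1$. When $k=0$, any two edges of positive weight must be disjoint, so this case cannot arise. Hence $w_1 \le 3k\binom{n-3}{2}-1$.

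\textbf{Combining.} In either case,
\[
w(G)\le k\binom{n-1}{2}+k\binom{n-2}{2}+k\binom{n-3}{2}+\ex_{\mathbb{N}}(n-3,F_5,3k+2).
\]
Substituting $\ex_{\mathbb{N}}(n-3,F_5,3k+2)=T(n-3)+h(n-3)$ and $T(n)=k\binom{n-1}{2}+k\binom{n-2}{2}+k\binom{n-3}{2}+1+T(n-3)$ (both sides follow from the preceding lemma's formula) yields $T(n)+h(n)\le T(n)-1+h(n-3)$. Thus $h(n)\le h(n-3)-1$.

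\textbf{Main obstacle.} The delicate step is making sure the improvement of $1$ in the Case $|e'\cap S|=1$ bound is genuinely available. The bound on $w_1$ used in Lemma~\ref{lem:3k+1rec} was the halved version, so the $-1$ gain must survive that halving. If it does not, the fix is to use the unhalved bound from Theorem~\ref{thm:stabAll}.\ref{thm:Stab3k+13k+2} whenever $H$ is empty, and to fall back on the Case $|e'\cap S|=2$ gain otherwise. We also need to check that $e'$ lies outside $S$'s internal triple and that $n-3\ge 5$, so that the earlier lemmas apply.
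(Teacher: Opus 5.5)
Once you adopt the fix from your last paragraph, your argument is exactly the paper's proof. Take an extremal non-sparse $G$, let $S$ be a heavy edge meeting another heavy edge, and split on $H$: if $H\neq\emptyset$, then $\sum_{e\in H}(1-\frac{n-4}{4})i_e\le -1$ for $n\ge 12$; if $H=\emptyset$, the other heavy edge meets $S$ in one vertex, $w_2\le 3(k-j)(n-3)$, and the unhalved bound gives $w_1\le 3k\binom{n-3}{2}-1$.

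That fix is mandatory, not optional, and the obstruction is not the halving. In your subcase $|e'\cap S|=1$ with $H\neq\emptyset$, the unhalved $w_1$ bound leaves the $+\sum_{e\in H}i_e$ term of $w_2$ uncancelled, so the case split has to be on $H$ rather than on $|e'\cap S|$.
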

\begin{proof}
Let $G$ be a $(F_5, 3k+2)$-free 3-graph with weight $\ex_{\mathbb{N}}(n, F_5, 3k+2)$. If $G$ is $(k+1)$-sparse, then $\ex_{\mathbb{N}}(n, F_5, 3k+2) = T(n)$, so $h(n) = 0$. Otherwise, $G$ is not $(k+1)$-sparse. Hence there is some edge $S$ of weight at least $k+1$ with another edge of weight at least $k+1$ intersecting $S$.

As before, let $w_0$, $w_1$, $w_2$, and $w_3$ represent the total weight of edges whose intersection with $S$ has size $0$, $1$, $2$, and $3$ respectively. Let $w(S) = k + 1 + j$ for some $0 \leq j \leq 2k$, so $w_3 = k + 1 +j$. We also know $w_0 \leq \ex_{\mathbb{N}}(n-3, F_5, 3k+2) = T(n-3) + h(n-3)$. Let $H$ be the set of edges of weight at least $k - j + 1$ whose intersection with $S$ has size 2. Suppose $H$ is not empty. Then we have the following bounds from Lemma~\ref{lem:3k+1rec}:
\[
w_1 \leq 3k \binom{n-3}{2} - \sum_{e \in H} \frac{(n-4)}{4} i_e
\] and 
\[
w_2 \leq 3(k-j) (n-3) + \sum_{e \in H} i_e.
\]
Since $H$ is nonempty by assumption and $n \geq 12$, we have that 
\[
\sum_{e \in H} i_e - \sum_{e \in H} \frac{(n-4)}{4} i_e \leq -|H| \leq -1.
\]
Hence the above bounds give us
\begin{align*}
\ex_{\mathbb{N}}(n, F_5, 3k+2) &\leq w_3 + w_2 + w_1 + w_0 \\
    &\leq (k+1+j) + \left(3(k-j) (n-3) + \sum_{e \in H} i_e\right)  \\
    &\quad\quad\quad + \left(3k \binom{n-3}{2} - \sum_{e \in H} \frac{(n-4)}{4} i_e \right) + \ex_{\mathbb{N}}(n-3, F_5, 3k+2)\\
    &\leq 3k \binom{n-3}{2} + 3(k-j) (n-3) - 1 + k + 1 + j + \ex_{\mathbb{N}}(n-3, F_5, 3k+2)\\
    &= 3k \binom{n-3}{2} + 3kn -8k + 1 - 1 + 10j - 3nj + \ex_{\mathbb{N}}(n-3, F_5, 3k+2)\\
    &= k \binom{n-1}{2} + k\binom{n-2}{2} + k\binom{n-3}{2} + 1 - 1 + j(10 - 3n)\\
    &\quad\quad\quad + \ex_{\mathbb{N}}(n-3, F_5, 3k+2)\\
    &\leq k \binom{n-1}{2} + k\binom{n-2}{2} + k\binom{n-3}{2} + 1 - 1 + \ex_{\mathbb{N}}(n-3, F_5, 3k+2)\\
    &= \left(k \binom{n-1}{2} + k\binom{n-2}{2} + k\binom{n-3}{2} + 1 + T(n-3) \right)\\
    &\quad\quad\quad + h(n-3) - 1\\
    &= T(n) + h(n-3) - 1
\end{align*}
Hence $h(n) \leq h(n-3) - 1$ in this case. In the other case, we assume $H$ is empty. This means
\[
w_2 \leq 3(k-j) (n-3).
\]
By choice of $S$, there is some edge of weight at least $k+1$ that intersects with $S$. Since it is not in $H$, it must be an edge whose intersection with $S$ is size 1. Since this edge exists, we can apply our original upper bound from Theorem~\ref{thm:all3asym} for $w_1$ to get 
\[
w_1 \leq 3k\binom{n-3}{2} - \sum_{\ell = k+1}^{2k} \left( \sum_{x_i \in S}  (\ell-k) d_\ell(x_i) \right) \leq 3k\binom{n-3}{2} - 1.
\]
By essentially the same argument as the last case, we get
\begin{align*}
\ex_{\mathbb{N}}(n, F_5, 3k+2) &\leq w_3 + w_2 + w_1 + w_0 \\
    &\leq (k+1+j) + \left(3(k-j) (n-3)\right) + \left(3k\binom{n-3}{2} - 1\right)\\
    &\quad\quad\quad + \ex_{\mathbb{N}}(n-3, F_5, 3k+2)\\
    &\leq 3k \binom{n-3}{2} + 3(k-j) (n-3) - 1 + k + 1 + j + \ex_{\mathbb{N}}(n-3, F_5, 3k+2)\\
    &\vdots\\
    &\leq T(n) + h(n-3) - 1
\end{align*}
Hence $h(n) \leq h(n-3) - 1$ in this case as well. Thus either $h(n) = 0$, or $h(n) \leq h(n - 3) - 1$. 
\end{proof}
For large enough $n$, this means $h(n)$ eventually becomes $0$. This yields Theorem~\ref{thm:exactAll}.\ref{thm:exact3k+2} as a corollary.

\subsection{Proof of Theorem~\ref{thm:exactAll}.\ref{thm:exact3k+3}}
We now prove the exact result for the $3k+3$ case. This uses a similar argument to the $3k+1$ case. 

\begin{proof}
Let $n$ be large. Let $G$ be a 3-graph on $n$ vertices with edge weights $w$ that is $(F_5, 3k+3)$-free of maximum total weight. Given a tripartition $V = V_1 \cup V_2 \cup V_3$, we call an edge a \textit{cross edge} if it has one point in each $V_i$. Let $U_1 \cup U_2 \cup U_3$ be the tripartition of $V$ that maximizes the number of cross edges of weight at least $k+1$. By applying Theorem~\ref{thm:stabAll}.\ref{thm:Stab3k+3} to $\varepsilon = \frac{1}{4000000(2k+2)(k+1)^2}$, we confirm $U_1 \cup U_2 \cup U_3$ has at most $\varepsilon n^3$ edges of weight at least $k+1$ that are not cross edges. Let $H$ be a weighted $3$-graph where all cross edges have weight $k+1$ and all non-cross edges have weight $k$. Recall our lower-bound construction is such a 3-graph with a balanced tripartition, which had weight $k \binom{n}{3} + s(n)$. Consequently $w(H) \leq k \binom{n}{3} + s(n) \leq w(G)$. For the sake of contradiction, suppose $G$ has weight strictly greater than $H$. Let $t_1$ be the number of edges of weight $k+1$, and $t_2$ the number of edges of weight at least $k+2$. By Theorem~\ref{thm:stabAll}.\ref{thm:Stab3k+3i}, $t_2 \leq \varepsilon n^3$. Then
\begin{align*}
k \binom{n}{3} + s(n) \leq w(G) &\leq k \left( \binom{n}{3} - t_1 - t_2 \right) + (k+1) t_1 + (3k+2) t_2\\
&\leq k \binom{n}{3} + t_1 + (2k + 2)t_2.
\end{align*}
There are at most $\varepsilon n^3$ edges of weight at least $k+1$ that are not cross edges, so $t_1 + t_2 \leq |U_1| |U_2| |U_3| + \varepsilon n^3$. Hence 
\[
|U_1| |U_2| |U_3| + \varepsilon n^3 \geq t_1 + t_2 \geq s(n) - (2k+1) t_2 \geq s(n) - (2k+1)\varepsilon n^3.
\]
Therefore $|U_1| |U_2| |U_3| \geq s(n) - (2k+2)\varepsilon n^3$. We now consider the size of each part. Without loss of generality, suppose $U_1$ is the smallest part with size $(\frac{1}{3} - \gamma)n$. For the sake of contradiction, suppose $\gamma > \sqrt{4 (2k+3) \varepsilon}$. Note that $|U_1| + |U_2| + |U_3| = n$ such that $0 \leq |U_i| \leq n$. If $|U_1| = (\frac{1}{3} - \gamma)n$, the product is optimized by setting $|U_2| = |U_3| = (\frac{1}{3} + \frac{\gamma}{2}) n$. We can then upper bound the number of edges in $H$ as
\begin{align*}
|U_1| |U_2| |U_3| &\leq \left[ \left(\frac{1}{3} - \gamma\right) \left(\frac{1}{3} + \frac{\gamma}{2} \right)^2\right] n^3 = \left[ \frac{1}{27} - \frac{\gamma^2}{4} - \frac{\gamma^3}{4} \right] n^3 \\
&\leq \left[ \frac{1}{27} - \frac{\gamma^2}{4}  \right] n^3 < \left[ \frac{1}{27} - (2k+3) \varepsilon  \right] n^3 < s(n) - (2k+2) \varepsilon n^3.
\end{align*} 
This is a contradiction, so we must have $\gamma \leq \sqrt{4 (2k+3) \varepsilon}$. In particular, this means each part has size at least $\frac{n}{4}$. 

Let
\begin{align*}
    B_1 &= \{e \in \binom{V}{3} : e \text{ is a cross edge}, w(e) \geq k+2\}\\
    B_2 &= \{e \in \binom{V}{3} : e \text{ is not a cross edge}, w(e) \geq k+1\}\\
    M_1 &= \{e \in \binom{V}{3} : e \text{ is a cross edge}, w(e) \leq k\}\\
    M_2 &= \{e \in \binom{V}{3} : e \text{ is not a cross edge}, w(e) \leq k-1\}\\
\end{align*}
Then 
\[
    w(H) < w(G) \leq w(H) + (2k+1) |B_1| + (2k+2) |B_2| - |M_1| - |M_2|.
\]
This implies
\[
(2k+2) (|B_1| + |B_2|) > |M_1| + |M_2|
\]
We must have $B_1$ or $B_2$ nonempty for this inequality to hold. Given a vertex $v$, let $d_{B_i}(v)$ denote the number of edges of $B_i$ that are incident to $v$ and likewise define $d_{M_i}(v)$. Suppose $e = xyz \in B_1$ such that $x \in U_1$, $y \in U_2$, and $z \in U_3$. Pick any $a \in U_2 \setminus \{y\}$, $b \in U_3 \setminus \{z\}$. At least one of $abx \in M_1$, $aby \in M_2$, or $abz \in M_2$ must hold. Otherwise, $abxyz$ would form an $F_5$ of weight $3k+3$. Hence for each $e \in B_1$, there is some vertex $v \in e$ such that $d_{M_1}(v) + d_{M_2}(v) \geq \frac{1}{3}\min_{i} (|U_i| - 1)^2$. Recall that $\min_i |U_i| \geq \frac{n}{4}$. Hence $d_{M_1}(v) + d_{M_2}(v) \geq \frac{1}{3}(\frac{n}{4} - 1)^2.$

Similarly, consider any $e = xyz \in B_2$. In this case, $e$ is not a cross edge. Without loss of generality, suppose $x,y \in U_1$ while $z$ in any part. Then for any $a \in U_2 \setminus \{z\}$ and $b \in U_3 \setminus \{z\}$, at least one of $xab$ or $yab$ belong to $M_1$. Thus for any $e \in B_2$, there is some vertex $v \in e$ such that $d_{M_1}(v) + d_{M_2}(v) \geq \frac{1}{3} \min_{i} (|U_i| - 1)^2 \geq \frac{1}{3} (\frac{n}{4}-1)^2$ as well.

Let $A = \{v \in V(G) : d_{M_1}(v) + d_{M_2}(v) \geq \frac{1}{3} (\frac{n}{4} - 1)^2\}$. Then we have shown every edge $e \in B_1 \cup B_2$ has a vertex $v \in A$. Thus
\begin{align*}
\sum_{v \in A} (d_{B_1}(v) + d_{B_2}(v)) &\geq |B_1| + |B_2| \\
&\geq \frac{|M_1| + |M_2|}{2k+2}= \frac{1}{6k+6} \sum_{v \in V} (d_{M_1}(v) + d_{M_2}(v))\\
&\geq \frac{1}{6k+6} \sum_{v \in A} (d_{M_1}(v) + d_{M_2}(v)) 
\\
&\geq \frac{|A|(\frac{n}{4} - 1)^2}{18k+18}.
\end{align*}
By averaging, there is some vertex $u \in A$ such that $d_{B_1}(u) + d_{B_2}(u) \geq \frac{(\frac{n}{4} - 1)^2}{18k+18}$. Without loss of generality, suppose $u \in U_1$. Let $C_1$ be the set of $e \in B_1$ containing $u$. Let $C_2$ be the set of $e = uxy \in B_2$ containing $u$ such that $x$ and $y$ are both in the same part $U_i$. Lastly, let $C_3$ be the set of $e = uxy \in B_2$ containing $u$ such that $x$ and $y$ are in different parts (and thus one of them lies in $U_1$ with $u$). Then $|C_1| + |C_2| + |C_3| = d_{B_1}(u) + d_{B_2}(u)$. 

There are two cases to consider. First, suppose $|C_3| \leq (d_{B_1}(u) + d_{B_2}(u))/2$, so $|C_1| + |C_2| \geq (d_{B_1}(u) + d_{B_2}(u))/2$. Let $e = uxy \in C_1$. Since $e \in B_1$, suppose $x \in U_2$ and $y \in U_3$. Then for any $a \in U_1 \setminus \{u\}$ and $b \in U_3 \setminus \{y\}$, either $axb \in M_1$ or $ayb \in M_2$. This means there are at least $(\frac{n}{4}-1)^2$ edges of $M_1$ or $M_2$ of the form $xab$ or $yab$.

Similarly, consider any $e = xyu \in C_2$. In this case, $x$ and $y$ are in the same part $U_i$. If $x,y \in U_1$ along with $u$, then for any $a \in U_2$ and $b \in U_3$, one of $xab$ or $yab$ belongs to $M_1$. If $x,y \in U_2$, then for any $a \in U_1 \setminus \{u\}$ and $b \in U_{3}$, one of $axb$ or $ayb$ belongs to $M_1$. In the last case, $x,y \in U_3$. Then for any $a \in U_1 \setminus \{u\}$ and $b \in U_2$, one of $abx$ or $aby$ belongs to $M_1$. In all cases, there are at least $(\frac{n}{4}-1)^2$ edges of $M_1$ of the form $xab$ or $yab$.

Note for both $C_1$ and $C_2$, if we count the edges that we found of the form $xab$ or $yab$ in $M_1 \cup M_2$, we overcount each edge at most $3n$ times. Hence 
\begin{align*}
|M_1| + |M_2| &\geq \frac{1}{3n}\sum_{e \in C_1 \cup C_2} \left(\frac{n}{4} - 1\right)^2 \geq \frac{1}{3n} \frac{(\frac{n}{4} - 1)^2}{36k+36} \left(\frac{n}{4} - 1\right)^2\\
&= \frac{n^3}{27648(k+1)} + O(n^2) > \frac{n^3}{2000000(k+1)^2} \\
&= 2(2k+2) \frac{n^3}{4000000(2k+2)(k+1)^2} = 2(2k+2) \varepsilon n^3.
\end{align*}

In the other case, we suppose $|C_3| \geq (d_{B_1}(u) + d_{B_2}(u))/2$. Let $C_3^{(i)}$ be the set of edges $e = uxy$ where $x \in U_1$ with $u$ and $y \in U_i$ (for $i \neq 1$). Let $L$ be the set of cross edges including $u$ that have weight at least $k+1$. Recall $U_1 \cup U_2 \cup U_3$ is the tripartition of $V$ that maximizes the number of cross edges of weight at least $k+1$. For the sake of contradiction, suppose $|C_3^{(i)}| > |L|$ for some $i$. Without loss of generality, let $i = 2$. Then consider moving $u$ to $U_3$. Then every edge in $C_3^{(i)}$ becomes a cross edge, while every edge of $L$ becomes a non-cross edge.  This contradicts the maximality of $U_1 \cup U_2 \cup U_3$. Thus $|C_3^{(i)}| \leq |L|$ for each $i$, so $|C_3| \leq 2 |L|$.

Now suppose $e = uxy \in C_3$. Without loss of generality, suppose $u, x \in U_1$ and $y \in U_2$. For any $a \in U_2 \setminus \{y\}$ and $b \in U_3$, one of $uab$ or $xab$ belongs to $M_1$. However, if $uab \in L$, then we must have $xab \in M_1$. This means there are at least 
\[
|L| - n \geq (d_{B_1}(u) + d_{B_2}(u))/4 - n \geq \frac{(\frac{n}{4} - 1)^2}{72k+72} - n
\]edges in $M_1$ of the form $xab$. As before, we overcount each edge at most $3n$ times. Hence 
\begin{align*}
|M_1| + |M_2| &\geq \frac{1}{3n}\sum_{e \in C_3} \left( \frac{(\frac{n}{4} - 1)^2}{72k+72} - n \right) \geq \frac{1}{3n} \frac{(\frac{n}{4} - 1)^2}{36k+36} \left( \frac{(\frac{n}{4} - 1)^2}{72k+72} - n \right)\\
&= \frac{n^3}{1990656(k+1)^2} + O(n^2) > \frac{n^3}{2000000(k+1)^2} \\
&= 2(2k+2) \frac{n^3}{4000000(2k+2)(k+1)^2} = 2(2k+2) \varepsilon n^3.
\end{align*}

Therefore, in both cases $|B_1| + |B_2| \geq \frac{|M_1| + |M_2|}{2k+2} > 2 \varepsilon n^3$. By the pigeonhole principle, either $|B_1| > \varepsilon n^3$ or $|B_2| > \varepsilon n^3$. This contradicts Theorem~\ref{thm:stabAll}.\ref{thm:Stab3k+3i} and Theorem~\ref{thm:stabAll}.\ref{thm:Stab3k+3ii} respectively. Hence $w(G) \leq w(H)$. Since $H$ is a 3-graph where all cross edges have weight $k+1$ and all non-cross edges have weight $k$, $w(G) \leq w(H) \leq k \binom{n}{3} + s(n)$, which is indeed achievable by our lower bound construction. Hence $w(G) = k \binom{n}{3} + s(n)$.
\end{proof}

\bibliographystyle{plainurl}
\bibliography{refs}{}
\end{document}